\newtheorem{theorem}{Theorem}[section]
\newtheorem{lemma}[theorem]{Lemma}
\theoremstyle{definition}
\newtheorem*{ack}{Acknowledgements}
\newtheorem*{con}{Conventions}
\newtheorem{remark}[theorem]{Remark}
\newtheorem{example}[theorem]{Example}
\newtheorem{definition}[theorem]{Definition}
\numberwithin{equation}{section} \numberwithin{figure}{section}
\DeclareMathOperator{\Spec}{Spec}
\DeclareMathOperator{\im}{Im}
\newcommand{\set}[2]{\left\{ #1 \; \middle| \; #2 \right\}}
\newcommand\ZZ{\mathbb{Z}}
\newcommand\QQ{\mathbb{Q}}
\newcommand\CC{\mathbb{C}}
\newcommand{\et}{\textrm{\'{e}t}}
\title[The Hilbert property for arithmetic schemes]{The Hilbert property for arithmetic schemes}
\author{Cedric Luger}
\address{Cedric Luger\\
Institut f\"{u}r Mathematik\\
Johannes Gutenberg-Universit\"{a}t Mainz\\
Staudingerweg 9, 55099 Mainz\\
Germany.}
\email{celuger@uni-mainz.de}
\subjclass[2020]
{14G99 
(14G05,  
14G40)}		
\keywords{Integral points, Hilbert property, Hilbert's irreducibility theorem, arithmetic schemes}
\begin{document}

\begin{abstract}  
We extend the usual Hilbert property for varieties over fields to arithmetic schemes over integral domains by demanding 
the set of near-integral points (as defined by Vojta) to be non-thin. We then generalize results of Bary-Soroker--Fehm--Petersen and Corvaja--Zannier by proving  several structure results related to products and finite \'{e}tale covers of arithmetic schemes with the Hilbert property.
\end{abstract}

\maketitle

\thispagestyle{empty}

  \section{Introduction}
  Recall that, for an integral variety $X$ over a field $K$, a subset $S\subseteq~X(K)$ is called \emph{thin} (in $X$ or in $X(K)$) if there exist an integer $n$, integral $K$-varieties $Y_1,\ldots,Y_n$ and dominant generically finite separable morphisms $\pi_i\colon Y_i\to X$ of degree at least two such that $S\setminus \bigcup_{i=1}^n \pi_i(Y_i(K))$ is not dense in $X$. One says that $X$ satisfies the \textit{Hilbert property over $K$} if $X(K)$ is not thin.
  The Hilbert property is closely related to the inverse Galois problem for $\QQ$ (see \cite[\S4]{SerreTopicsGalois}) and studied in
\cite{BSFP22,
Coccia,
CTS,
CDJLZ,
CZHP,
Demeio1, Demeio2,
FriedJarden,
GvirtzChenHuang, GvirtzChenMezzedimi,
JavanpeykarProductOfCurves, JavNef,
Lang60, LangDiophantine2,
LoughranSalgado,
NakaharaStreeter,
Streeter,
ZannierDuke}.

 In this paper we study an extension of the Hilbert property to schemes of finite type over integral domains (such as the ring of integers of a number field).
 In order to do so, it is natural to look at integral points (instead of rational points). In fact, as noticed by Vojta \cite[\S 4]{VojtaLangExc}, when studying properties of rational points on varieties over finitely generated fields of characteristic zero, it is more natural to look at ``near-integral points'' (see also \cite[\S 3]{JBook}).
 
 \begin{definition}\label{DefinitionNearIntegral}
 	Let $R$ be an integral domain with fraction field $K$ and let $X$ be a scheme over $R$. A rational point $x\in X(K)$ is called \emph{near-$R$-integral} if there exists
 	a dense open $U\subseteq \Spec R$ with complement of codimension at least two such that $x$ is contained in the image of $X(U)\to X(K)$. The set of near-$R$-integral points of $X$ is denoted $X(R)^{(1)}$.
 \end{definition}
 
 Our extension of the Hilbert property now reads as follows.
  
  \begin{definition}\label{DefinitionHPoverR}
  	Let $R$ be an integral domain with fraction field $K$. An integral scheme $X$ that is dominant, separated and of finite type over $R$ satisfies the \textit{Hilbert property over $R$} if $X(R)^{(1)}$ is not thin in $X(K)=X_K(K)$.
  \end{definition}
  
  We refer to an integral finite type separated dominant scheme over an integral domain $R$ as an \textit{arithmetic scheme} over $R$.
  Thus, the above notion extends the usual Hilbert property for varieties over fields to arithmetic schemes over integral domains.
  In this paper, our focus will be on arithmetic base rings such as the ring of $S$-integers in a number field $K$, but also certain finitely generated extensions of such rings.

\begin{example}
	Let $R$ be a $\mathbb{Z}$-finitely generated integral domain with fraction field $K$ of characteristic zero, and let $n\geq 1$ be a natural number.
	For $f\in K(t_1,\ldots,t_n)[X]$ irreducible, set
	\begin{align*}
		U_{f,K}
		& = \left\{
		    (t'_1,\ldots,t'_n) \in \mathbb{A}^n_K(K) \;\middle|\; f(t'_1,\ldots,t'_n,X) \text{ is defined and irreducible in } K[X]
		    \right\} \\
		& \subseteq \mathbb{A}_K^n(K)
	.\end{align*}
	A \textit{Hilbert subset} of $\mathbb{A}^n_K$, as defined in \cite{LangDiophantine2} and \cite{FriedJarden}, is the intersection of finitely many dense open subsets of $\mathbb{A}^n_K$ and finitely many sets $U_{f,K}$.
	In particular, the complement of a Hilbert subset in $\mathbb{A}^n_K(K)$ is a thin subset.
	It is shown in \cite[Ch. 9, Theorem~4.2]{LangDiophantine2} and \cite[Proposition 13.4.1]{FriedJarden} that every Hilbert subset of $\mathbb{A}^n_K$ contains an $R$-integral point,
	i.e., the image of $\mathbb{A}^n_R(R) \to \mathbb{A}^n_K(K)$ is not thin.
	Therefore, the arithmetic scheme $\mathbb{A}^n_R$ has the Hilbert property over $R$. (This also follows for all $n\geq2$ from the Hilbert property of $\mathbb{A}^1_R$ over $R$ and Theorem \ref{TheoremProductHP}.) Of course then $\mathbb{P}_R^n$ also has the Hilbert property over $R$, as it contains $\mathbb{A}^n_R$ as a dense open subscheme.
	
	However, if $p$ is a prime number and we let $D$ denote the image of an element $x \in \mathbb{A}^1(\mathbb{Z})$, then $\mathbb{A}^1_{\mathbb{Z}[1/p]}\setminus D$ does not have the Hilbert property over $\mathbb{Z}[1/p]$.
	Indeed, $\mathbb{A}^1_{\mathbb{Z}[1/p]} \setminus \{0\} \cong \mathbb{A}^1_{\mathbb{Z}[1/p]} \setminus D$ has non-trivial finite \'{e}tale covers, so that the claim follows from Theorem~\ref{TheoremEtaleGroupTrivial} below.
\end{example}

\begin{remark}
Let $X$ be a smooth projective variety over a number field $K$ with only finitely many $L$-points for each number field $L/K$. Lang asked whether this finiteness \emph{persists} over all finitely generated fields of characteristic zero (see \cite[p.~202]{Lang}). Lang's philosophy is that the right setting to consider finiteness questions for rational points is that of varieties over finitely generated fields of characteristic zero; see the ``Persistence Conjecture'' for a precise conjectural statement (e.g., \cite[Conjecture~1.5]{JAut}).  Similarly, the right setting to consider density questions for rational points (such as the Hilbert property) is that of varieties over finitely generated fields of characteristic zero, as we pursue here.
\end{remark}

Recall that Bary-Soroker--Fehm--Petersen \cite{BarySoroker} proved that the product of two varieties with the Hilbert property also satisfies the Hilbert property.
Their result gives an affirmative answer to an old question of Serre (asked in \cite[\S3.1]{SerreTopicsGalois}).
  Note that Bary-Soroker--Fehm--Petersen's theorem can also be deduced from \cite[Lemma~8.12]{HarpazWittenberg}, which builds on \cite[Lemma~3.12]{Wittenberg}.

 Our first main result is the following extension of Bary-Soroker--Fehm--Petersen's product theorem.
 
 \begin{theorem}\label{TheoremProductHP}
 		Let $R$ be an integral domain and let $X$ and $Y$ be arithmetic schemes over $R$. If $X$ and $Y$ satisfy the Hilbert property over $R$, then so does $X\times_R Y$.
 \end{theorem}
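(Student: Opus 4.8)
The plan is to first strip away the arithmetic, reducing the statement to an assertion about non-thin sets over the field $K$, and then to prove that assertion by the fibration method of Bary-Soroker--Fehm--Petersen \cite{BarySoroker}.

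\emph{Reduction to a statement over $K$.} I would begin by recording that, inside $X(K)\times Y(K)=(X\times_R Y)(K)$, one has
\[
(X\times_R Y)(R)^{(1)} \;=\; X(R)^{(1)}\times Y(R)^{(1)}.
\]
This follows from the universal property of the fibre product: a pair $(x,y)$ extends to a $U$-point of $X\times_R Y$ for some dense open $U\subseteq\Spec R$ if and only if $x$ and $y$ both extend over $U$, and if $x,y$ extend over opens $U_1,U_2$ with complements of codimension $\ge 2$, then both extend over $U_1\cap U_2$, whose complement still has codimension $\ge 2$. Writing $A:=X(R)^{(1)}$ and $B:=Y(R)^{(1)}$, the hypothesis is that $A$ and $B$ are non-thin. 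Since a variety carrying a non-empty non-thin set of rational points is geometrically integral ($K$ is algebraically closed in its function field), $X_K\times_K Y_K$ is integral and $X\times_R Y$ is again an arithmetic scheme. Thus the theorem is equivalent to the following field-theoretic statement, a mild generalisation of \cite{BarySoroker} from full point sets to arbitrary non-thin sets: \emph{if $A\subseteq X(K)$ and $B\subseteq Y(K)$ are non-thin, then $A\times B$ is non-thin in $(X\times_K Y)(K)$}. From now on $X,Y$ denote $K$-varieties.

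\emph{The fibration dichotomy.} Suppose for contradiction that $A\times B$ is thin, so that $A\times B\subseteq\bigcup_{i=1}^n\pi_i(Z_i(K))\cup D$ for integral $K$-varieties $Z_i$, dominant generically finite morphisms $\pi_i\colon Z_i\to X\times Y$ of degree $\ge 2$, and a proper closed $D\subsetneq X\times Y$. The key geometric observation is that each $\pi_i$ admits \emph{no} rational section: a rational section of $\pi_i$ would map $X\times Y$ birationally onto a dense subvariety of the integral scheme $Z_i$, forcing $\deg\pi_i=1$. Let $p_Y\colon X\times Y\to Y$ be the projection and $g_i:=p_Y\circ\pi_i$; for $b\in Y(K)$ the fibre $Z_{i,b}:=g_i^{-1}(b)=\pi_i^{-1}(X\times\{b\})$ is a cover of $X\times\{b\}\cong X$, and absence of a rational section says exactly that the restriction of $\pi_i$ to the generic fibre of $p_Y$, namely $Z_{i,\eta_Y}\to X_{K(Y)}$, is integral of degree $\ge 2$ and has no degree-$1$ component. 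Now fix $b\in B$ outside the proper closed locus $D':=\{b:X\times\{b\}\subseteq D\}$; then $A\subseteq\bigcup_i\pi_{i,b}(Z_{i,b}(K))\cup(D\cap X\times\{b\})$ with $D\cap X\times\{b\}$ non-dense in $X$. Decomposing each $Z_{i,b}$ into reduced irreducible components and using that $A$ is non-thin, some component of some $Z_{i,b}$ must map to $X$ birationally: otherwise every dominant component would be a degree-$\ge2$ cover (separable, as we are in characteristic zero) and the rest non-dense, exhibiting $A$ as thin. Hence every $b\in B\setminus D'(K)$ lies in
\[
\mathrm{Bad}(\pi_i):=\{\,b\in Y(K)\;:\;Z_{i,b}\to X\text{ has a degree-}1\text{ component}\,\}
\]
for some $i$.

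\emph{The crux, and the obstacle.} It remains to prove the key lemma that each $\mathrm{Bad}(\pi_i)$ is thin in $Y$; granting it, $B\subseteq D'(K)\cup\bigcup_i\mathrm{Bad}(\pi_i)$ is thin, contradicting non-thinness of $B$ and completing the proof. This lemma is the main obstacle. I would prove it by base change to $F:=K(X)$. Restricting $\pi=\pi_i$ to the generic fibre of the \emph{other} projection $X\times Y\to X$ gives an integral cover $\pi^X\colon Z_{\eta_X}\to Y_F$ of degree $\ge 2$ over $F$. A degree-$1$ component of $Z_b\to X$ is birational to $X$, so upon localising at the generic point of $X$ it yields an $F$-rational point of $Z_{\eta_X}$ lying over the constant point $b_F\in Y_F(F)$; hence $\mathrm{Bad}(\pi)\subseteq\{b\in Y(K):b_F\in T\}$, where $T:=\pi^X(Z_{\eta_X}(F))$ is the image of a single degree-$\ge2$ cover and so is thin in $Y_F(F)$. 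The proof then closes by invoking the behaviour of thin sets under the finitely generated base extension $K\subseteq F=K(X)$: the preimage of a thin subset of $Y_F(F)$ under the inclusion $Y(K)\hookrightarrow Y_F(F)$ of constant points is thin in $Y(K)$ (cf.\ \cite{FriedJarden}). This specialisation property of thin sets under extension of the base field is precisely the technical heart of the argument, and is where characteristic zero and finite generation are used; the rest of the proof is formal.
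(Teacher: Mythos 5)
Your arithmetic reduction is fine and coincides with the paper's: the identification $(X\times_R Y)(R)^{(1)} = X(R)^{(1)}\times Y(R)^{(1)}$ (intersect the two dense opens with codimension-$\ge 2$ complement) is exactly how the paper produces near-integral points on the product, and your field-theoretic target ``$A\times B$ is non-thin for non-thin $A,B$'' is the special case $\Gamma=A\times B$, $\Sigma=B$ of the paper's Theorem~\ref{TheoremFibrationIntegral}. The gap is in the field-theoretic half, and it is not a small one: the step you yourself call the technical heart is missing, and the way you try to discharge it is circular. You reduce thinness of $\mathrm{Bad}(\pi_i)$ to the assertion that, for $F=K(X)$, the preimage of a thin subset of $Y_F(F)$ under the inclusion of constant points $Y(K)\hookrightarrow Y_F(F)$ is thin in $Y(K)$, and you defer this to ``cf.~\cite{FriedJarden}''. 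Fried--Jarden contains no such statement in the form you need: their base-change results concern Hilbert subsets of affine space and descent along \emph{finite separable} extensions, not thin subsets of arbitrary varieties along transcendental extensions. Worse, the particular thin set you feed into this assertion, $T=\pi^X(Z_{\eta_X}(F))$, is nothing but the generic fibre of the original family $Z_i\to X\times Y\to X$; spreading back out, ``$b_F\in T$'' says precisely that $Z_{i,b}\to X$ acquires a rational section, i.e.\ (up to a proper closed locus) that $b\in\mathrm{Bad}(\pi_i)$. So the base-change property, applied to this $T$, \emph{is} the lemma you are trying to prove, relocated rather than proved. The missing content is what the paper supplies in Lemma~\ref{LemmaNotSubset} via Koll\'ar's relative Stein factorization \cite{KollarRat}: factor $Z_i\to Y$ through a finite \'etale $T_i\to Y$ with geometrically irreducible fibres of $Z_i\to T_i$, and observe that a degree-one (equivalently $K$-rational) component of $Z_{i,b}$ forces a $K$-point of $T_i$ over $b$, so that $\mathrm{Bad}(\pi_i)\subseteq r_i(T_i(K))$ when $\deg r_i\ge 2$, while $\deg r_i=1$ makes all nearby fibres irreducible of degree $\ge 2$.

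Independently of this, your sketch uses hypotheses the theorem does not grant. You invoke characteristic zero twice (separability of the dominant components of $Z_{i,b}$, and again in the base-change claim, where you also invoke finite generation), but $R$ is an \emph{arbitrary} integral domain: $K$ may be $\mathbb{F}_p(s)$, and then the components of $Z_{i,b}$ over $X$ need not be separable covers, so your dichotomy ``some component has degree one, or else $A$ is thin'' breaks down. The paper avoids this by reducing to finite \'etale covers \emph{before} fibering (Lemma~\ref{LemmaThinSetWeaker} and the shrinking at the start of the proof of Theorem~\ref{TheoremFibrationIntegral}), since \'etaleness passes to the fibres $Z_{i,b}\to X$. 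A smaller, fixable issue of the same kind: a component of $Z_{i,b}$ dominating $X$ need not be generically finite over $X$ (it may lie over the positive-dimensional-fibre locus of $\pi_i$), so even in characteristic zero your dichotomy requires $b$ to avoid a further proper closed subset of $Y$, which your write-up does not arrange.
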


  To prove Theorem \ref{TheoremProductHP} we follow closely Bary-Soroker--Fehm--Petersen's \cite{BarySoroker} proof
  for varieties.
  However, we will have to prove a slight improvement of their fibration theorem (see Theorem~\ref{TheoremFibrationIntegral} below) to deal with the more general situation of products of arithmetic schemes.
Bary-Soroker--Fehm--Petersen's product theorem for the Hilbert property over $K$ is a direct consequence of their following more general fibration theorem.

\begin{theorem}[{\cite[Theorem 1.1]{BarySoroker}}]\label{TheoremBarySorokerProductHP}
	Let $X \to S$ be a dominant morphism of integral varieties over a field $K$.
	Assume that the set of $s \in S(K)$ for which the fibre $X_s$ is integral and satisfies the Hilbert property over $K$
is not thin. Then $X$ satisfies the Hilbert property over $K$.
\end{theorem}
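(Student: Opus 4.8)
The plan is to show that for every finite family of dominant generically finite separable morphisms $\pi_i\colon Y_i\to X$ of degree at least two with each $Y_i$ integral, the complement $X(K)\setminus\bigcup_i\pi_i(Y_i(K))$ is dense in $X$; by the definition of thinness this is precisely the assertion that $X(K)$ is not thin. Writing $f\colon X\to S$ for the given dominant morphism, the strategy is to transfer the problem fibrewise: I would produce a thin set $E\subseteq S(K)$ such that for every $s$ outside $E$ with $X_s$ geometrically integral, the intersection $\bigcup_i\pi_i(Y_i(K))\cap X_s(K)$ is thin in $X_s$. Granting this, I pick a nonempty open $V\subseteq X$; since $f$ is dominant its image contains a dense open $V_0\subseteq S$ over which $V$ meets every fibre, and after shrinking $V_0$ I may assume $X_s$ is geometrically integral for $s\in V_0$. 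The set $(S_0\setminus E)\cap V_0(K)$, where $S_0$ is the non-thin set from the hypothesis, is a non-thin set with a thin set removed, hence itself non-thin and in particular nonempty. For any $s$ in it, $X_s$ is integral with the Hilbert property, and the equality $\pi_i(Y_i(K))\cap X_s(K)=\pi_{i,s}(Y_{i,s}(K))$ (a $K$-point of $Y_i$ landing in $X_s$ lies in the fibre $Y_{i,s}=Y_i\times_S\{s\}$) shows $\bigcup_i\pi_i(Y_i(K))\cap X_s(K)$ is thin in $X_s$. By the Hilbert property of $X_s$, the set $X_s(K)\setminus\bigcup_i\pi_i(Y_i(K))$ is dense in $X_s$ and so meets the nonempty open $V\cap X_s$, producing a $K$-point of $V$ avoiding all the $\pi_i(Y_i(K))$. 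As $V$ was arbitrary, the desired density follows.

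Everything thus reduces to the \emph{key claim}: for a single integral cover $\pi\colon Y\to X$ of degree $d\geq 2$, the set of $s\in S(K)$ with $X_s$ geometrically integral for which $\pi_s(Y_s(K))$ fails to be thin in $X_s$ is itself thin in $S$. The only way this image can fail to be thin is that some irreducible component of $Y_s$ dominates $X_s$ with degree one, i.e.\ $Y_s\to X_s$ acquires a rational section (components of higher degree contribute genuinely thin sets, and components not dominating $X_s$ land in a proper closed subset). So I must bound the locus of $s$ over which such a section appears, and the mechanism is the behaviour of the constant field. Concretely, I would let $k_0$ be the algebraic closure of $K(S)$ in $K(Y)$ and let $S'\to S$ be the normalisation of $S$ in $k_0$, a finite cover of degree $e=[k_0:K(S)]$.

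Here I expect the main difficulty, and I would split into two cases. If $e=1$, then $Y$ is geometrically connected over $K(S)$, so its generic fibre over $S$ is geometrically integral; by standard spreading-out the fibres $Y_s$ are then integral of degree $d\geq 2$ over $X_s$ for all $s$ in a dense open $U\subseteq S$, and no degree-one component can occur, so the bad locus lies in the proper closed set $S\setminus U$. If $e\geq 2$, I claim a degree-one component of $Y_s$ forces $s$ to lift to a $K$-point of $S'$: such a component produces a point $P\in Y$ over the generic point of $X_s$ with residue field $K(X_s)$, whose image in $S'$ is a point over $s$ whose residue field embeds into $K(X_s)$ while being algebraic over $\kappa(s)=K$; since $X_s$ is geometrically integral, $K$ is algebraically closed in $K(X_s)$, forcing this residue field to equal $K$. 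Hence the bad locus lies in the image of $S'(K)\to S(K)$, which is thin because $S'\to S$ has degree $e\geq 2$. In either case the bad locus is thin, and taking the union over the finitely many $\pi_i$ yields the thin set $E$. The delicate points I would have to treat carefully are exactly these constant-field and spreading-out reductions — in particular arranging geometric integrality of $X_s$ on a dense open (which absorbs any constant-field extension of $X$ over $S$) and checking that restricting each cover to a dense base open preserves separability and the degree — but the engine of the argument is the dichotomy above.
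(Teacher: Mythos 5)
Your route is, at its core, the same as the paper's: your $S'$ (the normalization of $S$ in the algebraic closure of $K(S)$ in $K(Y)$) is precisely the finite \'etale cover $T_i \to S$ that the paper extracts from Koll\'ar's factorization lemma in Lemma~\ref{LemmaNotSubset}, your dichotomy $e=1$ versus $e\geq 2$ is the paper's split of the index set into $I$ and $J$, and both arguments use non-thinness of the base set to dodge the image of $S'(K)\to S(K)$ before invoking the Hilbert property of a well-chosen fibre. There is, however, a genuine gap at the heart of your key claim, in the case $e\geq 2$: you speak of ``the image of $P$ in $S'$,'' i.e., you use a morphism $Y\to S'$ over $S$. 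Such a morphism exists when $Y$ is normal (sections of $\OO_{S'}$ are integral over $\OO_S$, hence land in the local rings of $Y$ by integral closedness), but for a general integral $Y$ one only has a rational map $Y\dashrightarrow S'$ defined on the normal locus, and your point $P$ --- the generic point of a component of the closed fibre $Y_s$ --- has no reason to lie in that locus, so its ``image in $S'$'' is not defined. You also cannot simply replace $Y$ by its normalization, since $K$-points of $Y$ need not lift to it, so $\pi(Y(K))$ is not controlled by the normalized cover. The repair is exactly the reduction the paper performs before its key lemma: replace the given covers by finite covers with normal total space which are \'etale over a shrunken $X$ (Lemma~\ref{LemmaThinSetWeaker}, via Nagata compactification and Stein factorization, together with the shrinking step in the proof of Theorem~\ref{TheoremFibrationIntegral}), using that the discarded loci map into proper closed subsets of $X$ because the $\pi_i$ are generically finite.

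The same missing reduction is what your other deferred points actually require, and deferring them by ``restricting each cover to a dense base open'' points in the wrong direction: shrinking $S$ does not help; one must shrink $X$ and modify the covers. Over a general field $K$ (the statement is not restricted to characteristic zero), your parentheticals ``components of higher degree contribute genuinely thin sets'' and ``the image of $S'(K)\to S(K)$ is thin'' both need separability, which can fail for fibres of a merely separable cover and for the constant-field extension $k_0/K(S)$; the paper gets both for free because after its reduction every cover in sight is finite \'etale, so the fibrewise covers $Y_{i,s}\to X_s$ are \'etale and Koll\'ar's lemma outputs an \'etale $T_i\to S$. Finally, your claim that after shrinking $V_0$ you may assume $X_s$ geometrically integral for all $s\in V_0$ is unjustified (the generic fibre of $X\to S$ need not be geometrically integral, e.g.\ when $X\to S$ factors through a nontrivial finite cover of $S$); this particular slip is harmless, since for the points $s$ you actually use the fibre has the Hilbert property, hence dense $K$-points, hence is geometrically integral --- but it should be argued that way rather than by shrinking.
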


Our proof of Theorem \ref{TheoremProductHP} differs only slightly from the one given in \cite{BarySoroker}, providing a minor improvement of Theorem \ref{TheoremBarySorokerProductHP}.
In fact, Theorem \ref{TheoremBarySorokerProductHP} suffices to conclude that, for $K$-varieties $X$ and~$Y$ satisfying the Hilbert property over $K$, the product $X\times_K Y$ also has the Hilbert property over $K$, but not for the analogue over an integral domain $R$ with fraction field $K$.
The problem in the situation of Theorem \ref{TheoremProductHP} is that the Hilbert property of a fibre
$X_{K,s}$ over $s\in S(R)^{(1)}$ does not give us near-$R$-integral points of $X$, and asking $X_s$ to satisfy the Hilbert property over $R$ is too
strong, because instead of points in $X_s(R)^{(1)}$, we are interested in points in $X(R)^{(1)}$ contained in $X_{K,s}$. This leads to the following natural generalization
of Theorem \ref{TheoremBarySorokerProductHP}.

\begin{theorem}
\label{TheoremFibrationIntegral}
	Let $X \to S$ be a dominant morphism of integral $K$-varieties and
$\Gamma \subseteq X(K)$ a subset. Assume that there exists a set $\Sigma \subseteq S(K)$, not thin in $S$, such that,
for each $s \in \Sigma$, the fibre $X_s$ is integral and $\Gamma \cap X_s$ is not a thin subset of $X_s$. Then $\Gamma$
is not thin in $X$.
\end{theorem}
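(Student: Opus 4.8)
The plan is to argue by contraposition: assuming $\Gamma$ is thin in $X$, I will produce a single $s\in\Sigma$ for which $\Gamma\cap X_s$ is thin in $X_s$, contradicting the hypothesis. So suppose $\Gamma\subseteq Z(K)\cup\bigcup_{i=1}^n\phi_i(Y_i(K))$ for a proper closed $Z\subsetneq X$ and dominant generically finite separable $\phi_i\colon Y_i\to X$ of degree $\ge 2$ with $Y_i$ integral. First I would reduce to a clean geometric situation: shrink $X$ to a dense open $X^\circ$ that is smooth over $K$ and over which every $\phi_i$ becomes finite étale (enlarging $Z$ to absorb the removed locus), and shrink $S$ so that $S$ is smooth and $X^\circ\to S$ is smooth, so that the fibres $X^\circ_s$ over $s\in S(K)$ are regular, hence normal. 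Replacing $Y_i$ by $\phi_i^{-1}(X^\circ)$, I would then pass to the Galois closure $\widetilde Y\to X^\circ$, a connected finite \'etale Galois cover with group $G$, so that $Y_i=\widetilde Y/H_i$ for proper subgroups $H_i<G$ (proper because $\deg\phi_i\ge 2$).

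The mechanism on fibres is as follows. Call $s\in S(K)$ \emph{good} if $X^\circ_s$ is a nonempty (hence integral, and now normal) dense open of $X_s$, if $Z\cap X^\circ_s\subsetneq X^\circ_s$, and if $\widetilde Y_s$ is connected. For such $s$, the base change $\widetilde Y_s\to X^\circ_s$ is finite \'etale of degree $|G|$ carrying its $G$-action, hence Galois with group $G$; since $X^\circ_s$ is normal, $\widetilde Y_s$ is normal and connected, thus integral, and therefore each $Y_{i,s}=\widetilde Y_s/H_i\to X^\circ_s$ is an integral finite \'etale cover of degree $[G:H_i]\ge 2$. As any $K$-preimage of a point of $X^\circ_s(K)$ lies over $s$, writing $\phi_{i,s}$ for the restriction of $\phi_i$ one obtains $\Gamma\cap X^\circ_s\subseteq (Z\cap X^\circ_s)(K)\cup\bigcup_i \phi_{i,s}(Y_{i,s}(K))$, which exhibits $\Gamma\cap X^\circ_s$ as thin in $X^\circ_s$; since $X^\circ_s$ is dense open in $X_s$, it follows that $\Gamma\cap X_s$ is thin in $X_s$. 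Thus it suffices to find a good $s$ lying in $\Sigma$.

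The non-good loci of the first two types (fibre meeting $X\setminus X^\circ$, or contained in $Z$) are contained in complements of dense opens of $S$, hence thin. The crux, and the main obstacle, is to show that $\{s\in S(K):\widetilde Y_s\text{ is disconnected}\}$ is thin; this is where a Hilbert-irreducibility-type input is used. For this I would factor $\widetilde Y\to S$ through the normalization $S''\to S$ of $S$ in the algebraic closure $F'$ of $K(S)$ in $K(\widetilde Y)$: after shrinking $S$, the map $S''\to S$ is finite \'etale of some degree $e$ and $\widetilde Y\to S''$ has geometrically connected fibres, so for $s\in S(K)$ the connected components of $\widetilde Y_s$ correspond to those of the fibre $S''_s$; in particular $\widetilde Y_s$ is disconnected only if $S''_s$ is. Passing to the Galois closure $\widehat S\to S$ of $S''\to S$ with group $G'$ (so $S''=\widehat S/H'$), a point $s$ with $S''_s$ disconnected has $\rho_s(\Gal(\overline K/K))$ non-transitive on $G'/H'$, hence the fibre $\widehat S_s$ reducible over $K$, i.e.\ the classifying homomorphism $\rho_s\colon\Gal(\overline K/K)\to G'$ is non-surjective; thus $\rho_s$ lands (up to conjugacy) in some maximal subgroup $M<G'$, equivalently $s$ lies in the image of $\widehat S/M\to S$. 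As there are finitely many conjugacy classes of maximal subgroups and each $\widehat S/M\to S$ is a cover of degree $[G':M]\ge 2$ with $\widehat S/M$ integral, the bad set is contained in a finite union of images of degree $\ge 2$ covers of $S$, hence is thin.

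Finally, all the shrinkings of $S$ discard only a thin subset of $\Sigma$, so $\Sigma$ remains non-thin in the shrunken base; and since the union of the three bad loci is thin, the set of good $s$ in $\Sigma$ is the difference of a non-thin set and a thin set, hence non-thin and in particular nonempty. Any such $s$ yields the desired contradiction. The only delicate points are the reduction to \emph{normal} fibres, needed so that a connected \'etale cover of $X^\circ_s$ stays integral and no spurious degree-one components arise, for which I rely on generic smoothness in characteristic zero, and the connectedness lemma of the previous paragraph, which is the essential arithmetic ingredient.
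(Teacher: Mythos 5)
Your proposal is sound over fields of characteristic zero and is a genuinely different argument from the paper's, so let me first compare. The paper reduces to finite surjective covers via Lemma~\ref{LemmaThinSetWeaker} (Nagata compactification plus Stein factorization), shrinks so that these become finite \'etale over normal opens, and then runs Lemma~\ref{LemmaNotSubset}: for each cover $Y_i\to X$ \emph{separately} it factors the composite $Y_i\to S$ (by Koll\'ar's lemma) into a morphism with geometrically irreducible fibres followed by a finite \'etale $r_i\colon T_i\to S$, splits the indices according to whether $\deg r_i\geq 2$, uses non-thinness of $\Sigma$ to choose $s\in\Sigma$ outside $\bigcup_i r_i(T_i(K))$ --- so that the fibres $Y_{i,s}$ with $\deg r_i\geq 2$ have no $K$-points at all --- and observes that the remaining $Y_{i,s}$ are integral (geometrically irreducible, and reduced because \'etale over the reduced $X_s$) covers of $X_s$ of degree at least two. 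You instead pass to one common Galois closure $\widetilde Y\to X^\circ$ and prove that the locus of $s$ with $\widetilde Y_s$ disconnected is thin, by the classical maximal-subgroup argument on the Galois closure $\widehat S$ of the finite part $S''\to S$; for good $s$, \emph{all} the $Y_{i,s}$ are then simultaneously integral covers of degree at least two. Both proofs pivot on the same Stein-type factorization over $S$ (your unproved assertion that, after shrinking, $S''\to S$ is finite \'etale and $\widetilde Y\to S''$ has geometrically connected fibres is exactly what the paper cites Koll\'ar for, so you should cite it too); but your Galois-theoretic connectedness step replaces the paper's dichotomy and empty-fibre trick, and your absorption of bad loci into $Z$ lets you skip the compactification lemma entirely, which is a nice structural simplification.

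The genuine gap is your reliance on generic smoothness to make the fibres $X^\circ_s$ normal, which confines the proof to characteristic zero. The theorem is stated over an arbitrary field $K$ (the word ``separable'' in the definition of thin is there precisely to accommodate positive characteristic), and the paper needs this generality: Theorem~\ref{TheoremProductHP} concerns an arbitrary integral domain $R$ and is proved by applying Theorem~\ref{TheoremFibrationIntegral} over $K=\operatorname{Frac}(R)$, which may have characteristic $p$. In characteristic $p$ generic smoothness fails, and fibre normality is not a cosmetic hypothesis in your argument: over a non-normal integral base, a connected finite \'etale cover can be reducible, with irreducible components mapping with degree one (a cycle of two projective lines over a nodal cubic is the standard example), which is exactly the degeneration you must exclude. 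The repair is to work with irreducibility rather than connectedness: take $S''$ to be the normalization of $S$ in the \emph{separable} algebraic closure of $K(S)$ in $K(\widetilde Y)$; after shrinking, the fibres of $\widetilde Y\to S''$ are then geometrically \emph{irreducible}, so for good $s$ the scheme $\widetilde Y_s$ is irreducible, and it is reduced because it is \'etale over the reduced $X_s$; hence $\widetilde Y_s$ and its quotients $Y_{i,s}=\widetilde Y_s/H_i$ are integral with no normality assumption at all. With that change (which is, in substance, how Lemma~\ref{LemmaNotSubset} argues) your proof becomes characteristic-free and establishes the theorem in full generality.
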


As Corvaja--Zannier show in \cite{CZHP}, the Hilbert property for varieties over number fields forces to\-po\-lo\-gi\-cal constraints on the variety.
Namely, if $X$ is a normal projective variety with the Hilbert property over a number field $K$, then each finite \'{e}tale cover of $X_{\overline{K}}$
is trivial, i.e., the \'etale fundamental group $\pi_1^{\et}(X_{\overline{K}})$ is trivial.
We follow very closely Corvaja--Zannier's line of reasoning and prove an analogue of their theorem in the setting of arithmetic schemes over regular finitely generated integral domains of characteristic zero.

\begin{theorem}\label{TheoremEtaleGroupTrivial}
	Let $R$ be a regular $\ZZ$-finitely generated integral domain with fraction field $K$ of characteristic zero, and let $X$ be a normal arithmetic scheme over $R$.
If $X$ has the Hilbert property over $R$, then $X$ does not have any non-trivial finite \'{e}tale covers over $R$, i.e.,
any  finite \'etale morphism  $Y \to X$  with $Y$ an arithmetic scheme over $R$ whose generic fibre $Y_K$ is geometrically connected is an isomorphism.
\end{theorem}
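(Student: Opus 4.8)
The plan is to run the twisting argument of Corvaja--Zannier, with the new arithmetic input coming from the near-integral structure of the points together with the regularity of $R$. Suppose for contradiction that there is a finite \'etale cover $f\colon Y\to X$ as in the statement with $\deg f>1$. Since $X$ is integral, hence connected, I would first pass to the Galois closure $W\to X$ of $f$ inside the Galois category of finite \'etale covers of $X$: this is a finite \'etale Galois cover with group $G=\Aut(W/X)\neq 1$, and $W$ is again a normal arithmetic scheme over $R$ (it is normal, being finite \'etale over the normal scheme $X$, and connected, hence integral). Writing $H_0=\Aut(W/Y)\le G$, we have $Y=W/H_0$ with $[G:H_0]=\deg f\ge 2$. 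On generic fibres $W_K\to X_K$ is finite \'etale Galois with group $G$ and $Y_K=W_K/H_0$; note $X_K$ is normal, being a localization of $X$.

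For a near-integral point $x\in X(R)^{(1)}$ the fibre $(W_K)_x$ is a $G$-torsor over $K$, i.e.\ a class $T_x\in \HH^1_{\et}(\Spec K,G)$, and $x$ lifts to $Y_K(K)$ exactly when $T_x$ reduces to $H_0$. The key point is that these torsor classes are severely constrained. By definition of $X(R)^{(1)}$ there is a dense open $U\subseteq \Spec R$ with $\codim(\Spec R\setminus U)\ge 2$ and a morphism $\widetilde x\colon U\to X$ lifting $x$; pulling back $W\to X$ along $\widetilde x$ yields a finite \'etale $G$-torsor over $U$ whose generic fibre is $T_x$. Here I would invoke Zariski--Nagata purity: since $R$ is regular and the complement of $U$ has codimension at least two, restriction induces an equivalence between finite \'etale covers (with $G$-action) of $\Spec R$ and of $U$, so $T_x$ already extends to a $G$-torsor over all of $\Spec R$. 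Hence every $T_x$ lies in the image of $\HH^1_{\et}(\Spec R,G)\to \HH^1_{\et}(\Spec K,G)$.

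The main obstacle, and the place where the hypotheses on $R$ are genuinely used, is the finiteness $|\HH^1_{\et}(\Spec R,G)|<\infty$. For $R=\OO_{K,S}$ this is exactly Hermite--Minkowski (finitely many extensions of bounded degree unramified outside $S$); for a general regular $\ZZ$-finitely generated domain it should follow from the corresponding finiteness theorem for finite \'etale covers of bounded degree of schemes of finite type over $\ZZ$, which I expect the paper to isolate as a preliminary lemma. Granting this, the set $\mathcal T=\{\,T_x : x\in X(R)^{(1)}\,\}$ is finite.

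Finally I would conclude by twisting. For each $T\in\mathcal T$ let ${}^{T}Y_K=({}^{T}W_K)/H_0\to X_K$ be the inner twist of $Y_K$ by $T$; it is finite \'etale of degree $[G:H_0]\ge 2$, and since twisting by a Galois cocycle does not change the geometric picture, ${}^{T}Y_{\overline{K}}\cong Y_{\overline{K}}$ is connected. Thus each ${}^{T}Y_K$ is geometrically connected, hence, being normal over the normal $X_K$, an integral $K$-variety, so it is an admissible witness in the definition of thinness. The torsor yoga gives $({}^{T_x}Y_K)_x\cong G/H_0$, a split fibre, so $x$ lifts to a $K$-point; therefore $x$ lies in the image of ${}^{T_x}Y_K(K)$ for every $x\in X(R)^{(1)}$, and $X(R)^{(1)}$ is contained in the finite union of the images of the covers ${}^{T}Y_K$ with $T\in\mathcal T$. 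As each such cover has degree $\ge 2$, this exhibits $X(R)^{(1)}$ as a thin subset of $X_K(K)$, contradicting the Hilbert property of $X$ over $R$. The geometric-connectedness hypothesis on $Y_K$ is used precisely to guarantee that the twists remain integral, while the regularity of $R$ and the codimension-two condition built into near-integrality are what feed the purity step that makes the relevant torsors unramified over all of $\Spec R$.
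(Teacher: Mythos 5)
Your arithmetic input is exactly the paper's: the paper also pulls the cover back along the extension $\widetilde{x}\colon U\to X$ of a near-integral point, extends the resulting cover of $U$ to a finite \'etale cover of $\Spec R$ by Zariski--Nagata purity (this is where regularity of $R$ and the codimension-two condition enter), and then invokes Hermite's finiteness theorem for finite \'etale covers of $\Spec R$ of given degree --- so the finiteness you hoped the paper would isolate is indeed there, as a citation. The divergence is in how the resulting constraint on the points is converted into thinness: the paper proves a Chevalley--Weil style containment $X(R)^{(1)}\subseteq \pi\bigl(Y(K')\bigr)$ for a single finite extension $K'/K$ and then quotes Corvaja--Zannier's Lemma (proved by restriction of scalars) to produce finitely many degree-$\geq 2$ covers over $K$, whereas you pass to the Galois closure and twist. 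Your twisting step, however, contains a genuine gap.

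The cover ${}^{T}Y_K=({}^{T}W_K)/H_0$ is not well defined. The twist ${}^{T}W_K$ is the descent of $W_{\overline{K}}$ along the modified Galois action $\sigma\mapsto c_\sigma\circ\sigma$ (for a cocycle $c$ representing $T$), and this modified action sends an $H_0$-orbit in $W_{\overline{K}}$ to a $c_\sigma H_0c_\sigma^{-1}$-orbit; so the $H_0$-quotient inherits a $K$-descent datum only when $c$ takes values in $N_G(H_0)$ (equivalently, $H_0$ is a $K$-subgroup of the inner form of $G$ only in that case). The natural repair --- writing $Y_K$ as the contracted product of the torsor $W_K$ with the $G$-set $G/H_0$ and twisting both factors --- is genuinely useless rather than just awkward: the two twists cancel in the contraction and give back a cover canonically isomorphic to $Y_K$, so the claimed split fibre $\bigl({}^{T_x}Y_K\bigr)_x\cong G/H_0$ fails; the fibre of that object at $x$ has a $K$-point if and only if the image of $c$ lies in a conjugate of $H_0$, i.e.\ if and only if $x$ already lifts to $Y_K(K)$. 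The correct way to finish along your lines is to drop the intermediate quotient entirely: every $x$ with $[W_x]=[T]$ lifts to a $K$-point of the twist ${}^{T}W_K$ itself, so $X(R)^{(1)}$ is covered by the $K$-points of the finitely many twists ${}^{T}W_K$, $T\in\mathcal{T}$; then decompose each twist into connected components and show each component has degree at least two over $X_K$. That last point is where geometric connectedness of $Y_K$ must really be used: it forces the geometric monodromy group $G'\leq G$ of $W_K\to X_K$ to act transitively on $G/H_0$, hence $\lvert G'\rvert\geq 2$, and since the twisting cocycle dies on the absolute Galois group of $\overline{K}\cdot k(X_K)$, every component of every twist has degree a multiple of $\lvert G'\rvert$. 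With that replacement your proof goes through and is a legitimate alternative to the paper's route; as written, the covers you feed into the thinness definition do not exist.
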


As an application of this result,
we extend Corvaja--Zannier's result on the (geometric) algebraic simple connectedness of normal proper varieties with the Hilbert property over number fields to finitely generated fields of characteristic zero.
Here the interpretation of $K$-points on the generic fibre as near-$R$-integral points on a suitable model is crucial (as they do not give integral $R$-points necessarily).

In light of the triviality of the fundamental group of varieties with the Hilbert property, the more refined ``weak-Hilbert property'' was introduced in \cite{CZHP}, and it was shown in \cite{CDJLZ} that abelian varieties with a dense set of $K$-points satisfy the weak-Hilbert property.  Conjecturally, a variety with a dense set of $K$-rational points should have the weak-Hilbert property and even be special in the sense of Campana. In light of Campana's Abelianity Conjecture \cite[Conjecture~7.1]{CampanaOrbifolds}, we expect the topological fundamental group of a variety with a dense set of rational points to be virtually abelian (i.e., to contain a finite index abelian subgroup).
In particular, as  smooth projective special varieties over $\mathbb{C}$ are expected to have a virtually abelian topological fundamental group, we also expect the topological fundamental group $\pi_1(X_{\CC})$ of a smooth projective variety $X$ over a number field $K$ with a dense set of $K$-rational points to be virtually abelian.
Although this is far from known, there has been some progress on its function field analogue in \cite[Theorem~1.15]{JavAlbanese} building on recent progress on Lang's conjecture \cite{JBook, Lang}.

Our last result is the persistence of the Hilbert property of arithmetic schemes under finite flat extensions of finitely generated integral domains of characteristic zero.
\begin{theorem}\label{TheoremFiniteFlatBaseChange}
	Let $R\subseteq S$ be a finite flat extension of $\ZZ$-finitely generated integral domains of characteristic zero,
	and let $X$ be an arithmetic scheme over $R$. If $X$ has the Hilbert property over $R$, then $X_S$ has the Hilbert property over $S$.
\end{theorem}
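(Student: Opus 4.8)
The plan is to write $K$ and $L$ for the fraction fields of $R$ and $S$, set $d=[L:K]$, and argue by contradiction, transporting a hypothetical thin covering of $X_S(S)^{(1)}$ over $L$ back to a thin covering of $X(R)^{(1)}$ over $K$. First I would record two reductions. Since $X$ has the Hilbert property over $R$, the set $X_K(K)\supseteq X(R)^{(1)}$ is not thin, in particular non-empty; as any $K$-point of $X_K$ forces the constant field $\overline{K}\cap K(X_K)$ to embed into $K$, this field equals $K$, so $X_K$ is geometrically connected and hence (characteristic $0$) geometrically integral. Consequently $X_L=X_K\times_K L$ is integral and $X_S=X\times_R S$ is an arithmetic scheme over $S$, so the statement makes sense. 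Next, because $R\subseteq S$ is finite flat, the preimage in $\Spec S$ of a closed subset of $\Spec R$ of codimension $\geq 2$ again has codimension $\geq 2$ (by the dimension formula for flat morphisms with finite fibres); hence base change sends a near-$R$-integral point of $X$ to a near-$S$-integral point of $X_S$, giving a map
\[
\iota\colon X(R)^{(1)}\longrightarrow X_S(S)^{(1)},\qquad \iota(x)=x_L\in X_L(L)=X_S(L),
\]
which is simply the inclusion $X_K(K)\hookrightarrow X_L(L)$.

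Now suppose for contradiction that $X_S(S)^{(1)}$ is thin in $X_L$ over $L$. Then there are a proper closed $Z\subsetneq X_L$ and dominant generically finite morphisms $\pi_i\colon Y_i\to X_L$ of degree $\geq 2$ with $Y_i$ integral $L$-varieties such that $X_S(S)^{(1)}\subseteq Z(L)\cup\bigcup_i\pi_i(Y_i(L))$. Pulling back along $\iota$ yields
\[
X(R)^{(1)}\subseteq \{x\in X_K(K): x_L\in Z\}\ \cup\ \bigcup_i\{x\in X_K(K): x_L\in\pi_i(Y_i(L))\}.
\]
Since a finite union of thin sets is thin and $X(R)^{(1)}$ is not thin over $K$, it suffices to prove that each set on the right is thin in $X_K$ over $K$; this contradiction finishes the proof. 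For the $Z$-term, passing to $\overline{K}$ identifies the unit $X_{\overline{K}}\to(\Res{L}{K}X_L)_{\overline{K}}=X_{\overline{K}}^{\,d}$ with the diagonal and $(\Res{L}{K}Z)_{\overline{K}}$ with a product $\prod_\sigma Z^{(\sigma)}$ of proper closed subsets, whose preimage under the diagonal is $\bigcap_\sigma Z^{(\sigma)}\subsetneq X_{\overline{K}}$; thus $\{x: x_L\in Z\}$ lies in a proper closed subset of $X_K$ and is not dense, hence thin.

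The crux is the $\pi_i$-terms. First I would discard those $Y_i$ that are not geometrically integral over $L$: for such $Y_i$ the constant field strictly contains $L$ and therefore embeds into the residue field of any $L$-point, forcing $Y_i(L)=\emptyset$. For the remaining (geometrically integral) covers, set $e=\deg\pi_i\geq 2$ and form the $K$-scheme
\[
W_i=\Res{L}{K}(Y_i)\times_{\Res{L}{K}(X_L)}X_K,\qquad \psi_i\colon W_i\to X_K,
\]
the base change of $\Res{L}{K}(\pi_i)$ along the unit $\eta\colon X_K\to\Res{L}{K}(X_L)$. By adjunction $(\Res{L}{K}Y_i)(K)=Y_i(L)$, so the $K$-points of $W_i$ are exactly the pairs $(x,y)$ with $x\in X_K(K)$, $y\in Y_i(L)$ and $\pi_i(y)=x_L$; hence $\psi_i(W_i(K))=\{x\in X_K(K):x_L\in\pi_i(Y_i(L))\}$. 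Base changing to $\overline{K}$ turns $\eta$ into the diagonal and $\Res{L}{K}(\pi_i)$ into $\prod_\sigma(\pi_i)_{\overline{K}}$, so that $(W_i)_{\overline{K}}$ is the $d$-fold fibre power $(Y_i)_{\overline{K}}^{\times_{X_{\overline{K}}}d}$. The key computation is then purely group-theoretic: writing $G$ for the geometric monodromy group of $(\pi_i)_{\overline{K}}$ and $H$ for a point stabiliser (so $e=[G:H]\geq 2$), the irreducible components of the fibre power correspond to $G$-orbits on $(G/H)^d$, and the orbit of $(g_1H,\dots,g_dH)$ maps to $X_{\overline{K}}$ with degree $|G|/|\bigcap_j g_jHg_j^{-1}|\geq |G|/|H|=e\geq 2$. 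Therefore every reduced irreducible component of $W_i$ dominating $X_K$ has degree $\geq 2$, while components with non-dense image contribute only to the non-dense remainder; by the definition of thinness, $\psi_i(W_i(K))$ is thin in $X_K$ over $K$.

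The heart of the argument, and the step I expect to be most delicate, is this last lemma realising $\{x\in X_K(K): x_L\in\pi_i(Y_i(L))\}$ as the image of the $K$-points of a genuine degree-$\geq 2$ cover of $X_K$. It is worth emphasising why a more naive approach fails: the image $\iota(X(R)^{(1)})$ is itself thin over $L$, since it lands in the proper closed subvariety $\eta(X_K)\subseteq\Res{L}{K}X_L$, so one cannot simply push a non-thin set forward along $\iota$; the point is rather that near-$S$-integral points are genuinely more numerous, and we compare thin \emph{coverings} instead. The remaining ingredients are routine: the existence of the Weil restrictions (one may take the $Y_i$ quasi-projective), the codimension-preservation under the finite flat morphism $\Spec S\to\Spec R$, the reduction to geometrically integral covers in characteristic zero, and the stability of thin sets under finite unions and under passing to subsets.
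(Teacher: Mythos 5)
Your proposal is correct, and its skeleton is identical to the paper's proof: step one, a near-$R$-integral point of $X$ base-changes to a near-$S$-integral point of $X_S$ because $\Spec S \to \Spec R$ is finite flat surjective and therefore pulls back closed subsets of codimension at least two to closed subsets of codimension at least two; step two, thinness of a set of $K$-points tested after the finite extension $L/K$ descends to thinness over $K$. The only real difference is that for step two the paper simply cites \cite[Proposition 3.2.1]{SerreTopicsGalois}, whereas you reprove that proposition from scratch: your construction $W_i = \Res{L}{K}(Y_i)\times_{\Res{L}{K}(X_L)} X_K$, the adjunction identifying $\psi_i(W_i(K))$ with $\set{x\in X_K(K)}{x_L\in\pi_i(Y_i(L))}$, and the monodromy bound $|G|/\bigl|\bigcap_j g_jHg_j^{-1}\bigr|\geq [G:H]\geq 2$ on the degrees of the dominating components constitute precisely the standard (indeed Serre's own) proof of that result. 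So the paper's version buys brevity --- the whole proof is a few lines --- while yours buys self-containedness; mathematically it is the same route, and your closing remark about why one cannot naively push the non-thin set forward along $X_K(K)\hookrightarrow X_L(L)$ correctly identifies why the comparison must go through coverings rather than images.

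Two imprecisions are worth repairing, though neither affects the structure of the argument. First, the justification you use twice --- that a $K$-point (resp.\ $L$-point) forces the constant field $\overline{K}\cap K(X_K)$ to embed into $K$ --- fails for non-normal varieties: pinching the origin of $\mathbb{A}^1_{\QQ(i)}$ to a $\QQ$-rational point yields an integral $\QQ$-variety with constant field $\QQ(i)$ and a $\QQ$-point. The correct standard argument (implicitly used by the paper as well) is that in characteristic zero the smooth locus of an integral variety is dense open, hence meets a dense set of rational points, and an integral variety with a smooth rational point is geometrically integral; for the covers $Y_i$ that are not geometrically integral, the right conclusion is that $Y_i(L)$ is not dense (not that it is empty), so its image is absorbed into the non-dense part of the thin covering. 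Second, $(W_i)_{\overline{K}}$ is not the $d$-fold fibre power of a single cover but the fibre product over $X_{\overline{K}}$ of the $d$ Galois conjugates $Y_i\otimes_{L,\sigma}\overline{K}$; since each conjugate is an irreducible cover of the same degree $e\geq 2$, your stabiliser bound goes through verbatim with $\bigcap_\sigma g_\sigma H_\sigma g_\sigma^{-1}$ in place of $\bigcap_j g_j H g_j^{-1}$.
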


 \begin{ack}
   We gratefully acknowledge Ariyan Javanpeykar for introducing us to this topic and many helpful discussions.
   We thank the referee for many helpful comments and suggestions.
   This research was funded by the Deutsche Forschungsgemeinschaft (DFG, German Research Foundation) – Project-ID 444845124 – TRR 326.
 \end{ack}
 
 \begin{con}
 If $K$ is a field, then a variety over $K$ is a finite type separated scheme over $K$.
 A field $K$ is said to be \emph{finitely generated} if it is finitely generated over its prime field. 
  If $X\to S$ is a morphism of schemes and $s\in S$, then $X_s$ denotes the scheme-theoretic fibre of $X$ over $s$.
  Throughout this paper, we assume $k$ to be a finitely generated field of characteristic zero.
  If $S$ is an integral scheme, we let $k(S)$ denote its function field.
  We say that a dominant morphism $f \colon X \to S$ of integral varieties is \textit{generically finite} if the induced extension $k(S)\to k(X)$ of function fields is finite,
  and that $f$ is \textit{separable} if $k(S)\to k(X)$ is separable.
  If $f$ is generically finite, then by the \textit{degree} of $f$ we mean the degree of its extension of function fields.
 \end{con}

\section{The proof of Theorems \ref{TheoremProductHP} and \ref{TheoremFibrationIntegral}}

We start with the following generalization of \cite[Lemma 3.2]{BarySoroker}.

\begin{lemma}\label{LemmaNotSubset}
	Let $f \colon X \to S$ be a dominant morphism of normal integral varieties over $K$ and let $\Gamma \subseteq X(K)$ be a subset. Assume that there exists a non-thin set $\Sigma\subseteq S(K)$ such that, for each $s\in \Sigma$, the fibre $X_s$ is integral and $\Gamma \cap X_s$ is not a thin subset of $X_s$. Let $Y_1,\ldots,Y_n$ be integral $K$-varieties and let  $\pi_i \colon Y_i \to X$ be finite \'{e}tale morphisms of degree at least two. Then
	\[
		\Gamma \not \subseteq \bigcup_{i=1}^n \pi_i(Y_i(K))
	.\]
\end{lemma}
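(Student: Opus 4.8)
The plan is to argue by contradiction, reducing the assertion to a thinness statement on the base $S$. So assume $\Gamma \subseteq \bigcup_{i=1}^n \pi_i(Y_i(K))$, fix $s$ in the non-thin set $\Sigma$, and intersect with the fibre. Since $X_s = X \times_S \{s\}$, we have $\pi_i^{-1}(X_s) = Y_i \times_S \{s\} =: Y_{i,s}$, and a $K$-point of $X_s$ lies in $\pi_i(Y_i(K))$ iff it lifts to $Y_{i,s}(K)$; thus $\pi_i(Y_i(K)) \cap X_s(K) = \pi_{i,s}(Y_{i,s}(K))$, where $\pi_{i,s}\colon Y_{i,s}\to X_s$ is finite \'etale of degree $\deg\pi_i \geq 2$ (using that $X$ is normal, so $Y_i$ is normal and $Y_{i,s}$ is finite \'etale over $X_s$). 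Decompose $Y_{i,s}$ into its clopen connected pieces and each piece further into irreducible components: if every irreducible component of $Y_{i,s}$ maps to $X_s$ with degree $\geq 2$, then its image in $X_s(K)$ is thin (degree $\geq 2$ finite separable map from an integral variety; here $\characteristic K=0$), so $\pi_{i,s}(Y_{i,s}(K))$ is thin. As $\Gamma \cap X_s \subseteq \bigcup_i \pi_{i,s}(Y_{i,s}(K))$ is not thin while a finite union of thin sets is thin, for some $i$ the cover $Y_{i,s}\to X_s$ has an irreducible component of degree one over $X_s$, i.e.\ a rational section. Hence $\Sigma \subseteq \bigcup_{i=1}^n B_i$, where
\[
    B_i := \set{ s \in S(K) }{ X_s \text{ integral and } Y_{i,s}\to X_s \text{ has a degree-one irreducible component}}.
\]

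Since a finite union of thin sets is thin while $\Sigma$ is not thin, the lemma follows once each $B_i$ is shown to be thin; this is the heart of the argument. A degree-one component of $Y_{i,s}\to X_s$ is, at the generic point $\xi_s$ of $X_s$, a point $z$ of $Y_i$ with $\pi_i(z)=\xi_s$ and trivial residue extension $k(z)=k(\xi_s)$. Over the generic point of $X$ this is impossible, as the fibre of $\pi_i$ there is $\Spec k(Y_i)$ with $[k(Y_i):k(X)]=\deg\pi_i\geq 2$; so ``having a rational section along $X_s$'' is a non-generic condition on $s$. I stress, however, that $B_i$ is genuinely arithmetic and \emph{not} contained in a proper closed subset in general: already for $S=\Gm$ and the quadratic cover $t\mapsto t^2$ pulled back over a base $X\to S$, the resulting $B_i$ is the Zariski-dense set of squares. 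Thus the thinness of $B_i$ is a relative form of Hilbert's irreducibility theorem, not a dimension count.

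To prove $B_i$ thin I would pass to the Galois closure $\tilde Y_i\to X$ of $\pi_i$ (available since $X$ is normal and $\characteristic K=0$), with group $G_i=\Gal(k(\tilde Y_i)/k(X))$ and the subgroup $H_i$ cut out by $Y_i$, so that membership $s\in B_i$ is equivalent to the fibre monodromy of $X_s$ being contained in a conjugate of $H_i$; over the generic point this monodromy is all of $G_i$. Organizing this through the Stein factorization $Y_i \to \tilde S_i \to S$, with $\tilde S_i$ the normalization of $S$ in $k(Y_i)$, one sees that a rational section along $X_s$ corresponds to a geometrically connected component of the fibre of $g_i=f\circ\pi_i$ over a point $\tilde s\in\tilde S_i$ above $s$ whose residue field equals the field of constants of $X_s$. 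Running over the finitely many intermediate covers $S\subseteq S'\subseteq \tilde S_i$ attached to the subgroups interpolating $H_i$, this expresses $B_i$ as contained in a finite union of images $h'(S'(K))$ of finite covers $h'\colon S'\to S$ of degree $\geq 2$, each of which is thin; hence $B_i$ is thin.

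The main obstacle is precisely this last thinness of $B_i$, i.e.\ controlling how the covers $\pi_{i,s}$ split as $s$ varies. Two points need care. First, the fibres $X_s$ need \emph{not} be geometrically connected, so a rational section along $X_s$ generally lives over a point $\tilde s\in\tilde S_i$ of residue degree $>1$; consequently $B_i$ is \emph{not} captured by $h_i(\tilde S_i(K))$ alone (a concrete instance is $X=\{v^2=s\}$ with the cover $w^2=s-1$, where the sections force $s\in\{a^2+1\}\cup\{1/(1-b^2)\}$, each a degree-two cover image), and it is exactly the passage to the intermediate covers $S'$ that repairs this. Second, one must ensure the relevant loci are of finite type and that the maps $S'\to S$ are honest covers in the sense of the definition of thin; here $\characteristic K=0$ provides separability and the normality of $X$ makes a degree-one finite \'etale map an isomorphism. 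The overall strategy follows \cite[Lemma 3.2]{BarySoroker}, the only change being that we track the fixed subset $\Gamma$ in place of all $K$-points.
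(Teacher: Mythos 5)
Your first step is fine, and it is organized differently from the paper: fixing $s\in\Sigma$, decomposing each $Y_{i,s}$ into irreducible components (each of which dominates $X_s$, by flatness of $\pi_{i,s}$), and noting that components of degree $\geq 2$ have thin images, you correctly reduce the lemma to the claim that each set $B_i$ is thin in $S$, and you correctly flag this claim as the heart of the matter. The gap is that this claim is then asserted rather than proved, and the mechanism you propose for it is wrong. Your own example refutes it: there $G_i=\Gal(k(\tilde Y_i)/k(X))\cong \ZZ/2$ and $H_i=\{1\}$, so there are no ``subgroups interpolating $H_i$'', and the covers attached to $H_i$ and $G_i$ are $Y_i\to X$ and $X\to X$ -- covers of $X$, not of $S$. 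The two covers of $S$ that are actually needed, namely $a^2=s-1$ and $c^2=s(s-1)$, correspond to index-two subgroups of $\Gal\bigl(K(\sqrt{s},\sqrt{s-1})/K(s)\bigr)\cong \ZZ/2\times\ZZ/2$, i.e.\ to subcovers of the (Galois closure of the) Stein factorization of $Y_i\to S$; they have nothing to do with $H_i$. A correct proof that $B_i$ is thin needs, at least: the Stein factorization $T_X\to S$ of $X\to S$ itself, which is the cover of $S$ computing the field of constants $\kappa_s$ of $X_s$ (you invoke $\kappa_s$ but never introduce this cover); the observation that a degree-one component forces a point $\tilde s$ of the Stein factor $\tilde S_i$ lying over the unique point of $(T_X)_s$ with residue degree one; and then a decomposition-group argument on the Galois closure of $\tilde S_i\to S$, split into the cases $\tilde S_i=T_X$ (where $B_i$ meets the good locus in the empty set) and $\deg(\tilde S_i/T_X)\geq 2$ (where $s\in B_i$ forces the decomposition group to be a proper subgroup, hence $s$ lies in the image of $K$-points of a quotient of the Galois closure by a maximal proper subgroup). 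None of this is in your sketch, and ``intermediate covers of $\tilde S_i$'' is not even the right class of covers when $\tilde S_i\to S$ fails to be Galois. (A smaller issue: $k(Y_i)/k(S)$ is not a finite extension unless $X\to S$ is generically finite, so ``the normalization of $S$ in $k(Y_i)$'' must be replaced by the normalization of $S$ in the algebraic closure of $k(S)$ in $k(Y_i)$.)

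For comparison, the paper never proves anything like ``$B_i$ is thin'' and thereby avoids this entire difficulty. It applies Koll\'ar's lemma to $\varphi_i=f\circ\pi_i$ to get a factorization $Y_i\to T_i\to S$ with geometrically irreducible fibres followed by a finite \'etale map $r_i$, and chooses a single $s\in\Sigma$ outside the thin set $\bigcup_{\deg r_i\geq 2} r_i(T_i(K))$. For those indices the fibre $Y_{i,s}$ then has no $K$-points at all -- a much stronger and much cheaper conclusion than controlling degree-one components -- while for the indices with $\deg r_i=1$ the fibre $Y_{i,s}$ is irreducible of degree $\geq 2$ over $X_s$, so its image is thin in $X_s$; a point of $\Gamma\cap X_s$ avoiding these finitely many thin sets finishes the proof. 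Your route can in principle be completed along the lines indicated above, but as written the central step is missing, and the recipe offered in its place fails on your own test case.
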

\begin{proof}	
	For $i=1,\ldots,n$, consider the composite morphism  $\varphi_i := f \circ \pi_i \colon Y_i \to S$. By
	\cite[Lemma~9]{KollarRat}, there is a dense open subscheme $U_i$ of $S$ and a factorization
	\[
		\varphi_i^{-1}(U_i)
		\overset{g_i}{\longrightarrow}
		T_i
		\overset{r_i}{\longrightarrow}
		U_i
	\]
of $\varphi_i$ such that the morphism $g_i$ has geometrically irreducible fibres, $r_i$ is finite and
\'{e}tale, and $T_i$ is an integral $K$-variety.

Replacing $S$ by $\bigcap_{i=1}^n U_i$ if necessary (and pulling back $X\to S$, $Y_i\to S$ and $\Gamma$ accordingly),
we may assume that $r_i \colon T_i \to S$
is finite \'{e}tale for every $i$.

Set $J = \set{i}{ \deg r_i \geq 2 } \subseteq \{1,\ldots,n\}$ and let $I$ be its complement.

For $s\in \Sigma$ and $i\in \{1,\ldots,n\}$, the morphism $\pi_{i,s} \colon Y_{i,s} \to X_s$ induced by $\pi_i$ is finite étale of the same degree as $\pi_i$ and $Y_{i,s}$ is a reduced scheme over $K$. We have the following commutative diagram.
\[
	\xymatrix{
	Y_{i,s} \ar[r] \ar[d]_{\pi_{i,s}} & Y_i \ar[r]^{g_i} \ar[d]_{\pi_i} \ar[dr]^{\varphi_i} & T_i \ar[d]^{r_i} \\
	X_s \ar[r] & X \ar[r]^f & S
	}
\]
Since $\Sigma$ is not thin, there exists a point
\[
	s \in \Sigma \setminus \bigcup_{i\in J} r_i(T_i(K))
.\]
Therefore, for any $i\in J$, we have $T_{i,s}(K)=\emptyset$ and thus $\bigcup_{i\in J}Y_{i,s}(K)=\emptyset$.
Furthermore, for $i \in I$, the morphism $r_i$ is finite étale of degree one, i.e., an isomorphism. Therefore, the reduced scheme $Y_{i,s}$ is also (geometrically) irreducible, i.e., an integral $K$-variety.
Since $\Gamma\cap X_s$ not thin in $X_s$, there exists an element $x \in \Gamma \cap X_s$ such that
\[
	x \notin \bigcup_{i \in I} \pi_{i,s}(Y_{i,s}(K))
.\]
Since $\bigcup_{i\in J}Y_{i,s}(K)=\emptyset$, we get that
\[
	x \notin \bigcup_{i \in I} \pi_{i,s}(Y_{i,s}(K)) = \bigcup_{i=1}^n \pi_{i,s}(Y_{i,s}(K))
.\]
Thus, $x\notin \bigcup_{i=1}^n \pi_{i}(Y_{i}(K))$, as required.

\end{proof}

Before proving our fibration theorem, we show that it suffices to consider finite surjective coverings (by normal geometrically irreducible varieties) when testing for thinness of a set.

	\begin{lemma}\label{LemmaThinSetWeaker}
	Let $X$ be an integral $K$-variety and $S\subseteq X(K)$. Then $S$ is thin if and only if there exists an integer $n \in \mathbb{N}$ and, for each $i = 1,\ldots,n$, a normal geometrically irreducible $K$-variety $Y'_i$ endowed with a finite surjective separable morphism $\pi'_i \colon Y'_i \to X$ of degree at least two such that the set
	\[
		S \setminus \bigcup_{i=1}^n \pi'_i (Y'_i(K)) \subseteq X
	\]
	is not dense in $X$.
\end{lemma}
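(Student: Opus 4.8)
The plan is to prove the two implications separately; the backward one is immediate, while the forward one is obtained by successively replacing the coverings witnessing thinness by coverings of the required shape, at each stage altering the relevant image of $K$-points only by a non-dense subset of $X(K)$. Throughout I would use the following \emph{absorption principle}: since $X$ is irreducible, a finite union of non-dense subsets of $X(K)$ is non-dense, so if $\{\pi_i\colon Y_i\to X\}$ and $\{\pi'_j\colon Y'_j\to X\}$ are finite families with $\bigcup_i \pi_i(Y_i(K)) \subseteq \bigcup_j \pi'_j(Y'_j(K)) \cup Z$ for some non-dense $Z\subseteq X(K)$, then $S\setminus\bigcup_j \pi'_j(Y'_j(K))$ is contained in $(S\setminus\bigcup_i \pi_i(Y_i(K)))\cup Z$; hence if the first family witnesses non-density of the difference set, so does the second. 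For the backward implication this is all one needs: each $Y'_i$ is normal and geometrically irreducible, hence integral, and a finite surjective separable morphism of integral varieties is dominant, generically finite and separable of function-field degree $\geq 2$, so the $\pi'_i$ are admissible data in the definition of thinness and exhibit $S$ as thin.

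For the forward implication, assume $S$ is thin, witnessed by dominant generically finite separable morphisms $\pi_i\colon Y_i\to X$ of degree $\geq 2$ from integral $K$-varieties $Y_i$. First I would pass to finite normal coverings by letting $Y'_i$ be the normalization of $X$ in the function field $k(Y_i)$. Since $X$ is an excellent (Nagata) variety over $K$ and $k(Y_i)/k(X)$ is finite, $Y'_i$ is a normal integral variety and the induced $\pi'_i\colon Y'_i\to X$ is finite; it is surjective (being finite and dominant), separable, and of the same function-field degree $\geq 2$ as $\pi_i$. To control the change of images, observe that the fibres of $\pi_i$ and $\pi'_i$ over the generic point $\eta=\Spec k(X)$ of $X$ are both canonically identified with $\Spec k(Y_i)$; by a standard spreading-out argument this isomorphism extends to an isomorphism $\pi_i^{-1}(U_i)\cong (\pi'_i)^{-1}(U_i)$ of $U_i$-schemes over a dense open $U_i\subseteq X$. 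Writing $Z_i=(X\setminus U_i)(K)$, which is non-dense, every $y\in Y_i(K)$ with $\pi_i(y)\in U_i$ gives a $K$-point of $Y'_i$ with the same image, whence $\pi_i(Y_i(K))\subseteq \pi'_i(Y'_i(K))\cup Z_i$. The absorption principle then lets me replace the $\pi_i$ by the finite normal coverings $\pi'_i$.

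It remains to make the coverings geometrically irreducible. For each $i$ let $L_i$ be the algebraic closure of $K$ in $k(Y'_i)$. Because $Y'_i$ is normal, $L_i\subseteq \Gamma(Y'_i,\OO_{Y'_i})$, so the structure morphism factors through $\Spec L_i$; consequently a $K$-point of $Y'_i$ induces a $K$-algebra homomorphism $L_i\to K$, forcing $L_i=K$. In characteristic zero (as in our standing conventions), $L_i=K$ is equivalent to $Y'_i$ being geometrically irreducible. Thus, if $Y'_i$ is geometrically irreducible I keep $\pi'_i$; otherwise $Y'_i(K)=\emptyset$, so $\pi'_i(Y'_i(K))=\emptyset$ and by the previous step $\pi_i(Y_i(K))\subseteq Z_i$ is non-dense, so I discard this index. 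Applying the absorption principle once more, discarding these indices changes the relevant difference only by a non-dense set, and the surviving $\pi'_i$ are finite surjective separable coverings of degree $\geq 2$ by normal geometrically irreducible varieties, as required. If every index is discarded, then $S$ is itself non-dense; this degenerate situation, like the case $\dim X=0$, is handled directly, since a non-dense $S$ trivially satisfies the stated condition.

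The main obstacle is the finite/normal reduction. Passing to a normalization can only \emph{shrink} the image of $K$-points, so one cannot proceed by a naive inclusion of point sets; the resolution is to compare $\pi_i$ and $\pi'_i$ over a dense open $U_i\subseteq X$ where they become isomorphic $U_i$-schemes, and to absorb the discrepancy over $X\setminus U_i$ into a non-dense closed subset. Normality then enters a second, equally essential, time in the geometric-irreducibility step, where it furnishes the factorization through $\Spec L_i$ that makes $Y'_i(K)$ empty precisely for the non-geometrically-irreducible covers.
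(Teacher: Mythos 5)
Your proof is correct, but it takes a genuinely different route from the paper's. The paper argues the contrapositive: it first discards the non-geometrically-irreducible $Y_i$ (using that an integral variety which is not geometrically irreducible has non-dense $K$-points), then makes each $\pi_i$ proper via Nagata compactification, and extracts the required finite covers as the Stein factorizations $Y_i'\to X_i\to X$, with Zariski's Main Theorem giving $\deg(X_i\to X)=\deg(Y_i\to X)\geq 2$; the hypothesis is then applied to the family $\{X_i\to X\}$, and emptiness of fibres propagates back to the $Y_i$. You instead argue directly, replacing each $Y_i$ by the normalization of $X$ in $k(Y_i)$ and---this is the key step, which you correctly single out as the main obstacle---comparing the old and new covers over a dense open $U_i\subseteq X$ where they become isomorphic (by spreading out the common generic fibre $\Spec k(Y_i)$), so that the image of $K$-points changes only by the non-dense set $(X\setminus U_i)(K)$; your absorption principle then permits the swap, and normality handles geometric irreducibility via the factorization through $\Spec L_i$, where the paper instead relies on non-density of $K$-points. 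Each approach buys something. Yours makes normality of the replacement covers true by construction, whereas in the paper's proof the normality of the Stein factor $X_i$ is asserted rather than proved (a Nagata compactification of a possibly non-normal $Y_i$ need not be normal, and repairing that point would require essentially the dense-open comparison you carry out). The paper's reduction, on the other hand, is characteristic-free, while your geometric-irreducibility step invokes characteristic zero.

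On that last point, be careful: the standing characteristic-zero convention in the paper concerns the field $k$ used in Section 3, whereas this lemma is applied with $K$ the fraction field of an \emph{arbitrary} integral domain (Theorems \ref{TheoremProductHP} and \ref{TheoremFibrationIntegral}), so $K$ may have positive characteristic. This is not a real obstruction to your argument: take $L_i$ to be the \emph{separable} algebraic closure of $K$ in $k(Y_i')$; separably algebraic elements are still integral over $K$, so $L_i\subseteq\Gamma(Y_i',\OO_{Y_i'})$, a $K$-point still forces $L_i=K$, and $L_i=K$ is equivalent to geometric irreducibility of the integral variety $Y_i'$ in any characteristic. With that one-line modification your proof covers the full generality in which the lemma is used.
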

\begin{proof}
	We only need to prove the ``only if'' part of the statement.
	Assume that $S \setminus \bigcup_{i=1}^n \pi'_i (Y'_i(K))$ is dense for all finite surjective separable $\pi'_i \colon Y'_i \to X$ as in the statement of the lemma. We then have to show that $S$ is not thin, i.e., given integral $K$-varieties $Y_1,\ldots,Y_n$ and $\pi_i \colon Y_i \to X$ dominant generically finite separable morphisms of degree $\deg \pi_i \geq 2$, we have to show that there exists a dense set of points $x \in S$ of $X$ such that $Y_{i,x}(K) = \emptyset$ for all~$i$.
	
	If any of the $Y_i$ is not geometrically irreducible, then $Y_i(K)$ is not dense in $Y_i$, and therefore $\pi_i(Y_i(K))$ is not dense in $X$, so we may and do assume all $Y_i$ to be geometrically irreducible.
	Since the morphisms $\pi_i$ are separated and of finite type, we can use Nagata's compactification theorem (see \cite{Nagata1, Nagata2}, or \cite{ConradNagataCompactification} for a more modern proof) to obtain an open immersion $Y_i \hookrightarrow Y_i'$ over $X$, where the morphism $\pi_i' \colon Y_i' \to X$ is proper.
	If $Y'_{i,x}(K) = \emptyset$ for some $x\in S$, then this implies $Y_{i,x}(K)	= \emptyset$, so it suffices to show that $S \setminus \bigcup_{i=1}^n \pi'_i (Y'_i(K))$ is dense in $X$.
	Let
	\[
		Y'_i
		\overset{f_i}{\longrightarrow} X_i
		\overset{g_i}{\longrightarrow} X
	\]
	be the Stein factorization of $\pi'_i$, where $f_i$ is proper and has geometrically connected fibers, and $g_i$ is finite. Since $\pi'_i$ is dominant and proper (which implies closed), it is also surjective, so $g_i$ must be surjective too.
	
	If $\eta_i$ denotes the generic point of $X_i$, then $Y'_{i,\eta_i}$ is finite and connected, i.e. an isolated point. Since $X_i$ is normal, $f_i$ is birational by Zariski's Main Theorem.
	Thus, we get
	\[
		\deg(X_i \to X) = \deg(Y'_i \to X) = \deg(Y_i \to X) \geq 2
	.\]

	The $X_i \to X$ are finite surjective separable of degree at least two, so that by our assumption there is a dense set of $K$-points $x\in S$ of $X$ such that none of the $X_{i,x}$ have a $K$-point, hence neither do the $Y'_{i,x}$, which is what we had to show.
\end{proof}

We now prove our fibration theorem.

\begin{proof}[Proof of Theorem \ref{TheoremFibrationIntegral}]
	Let $C\subsetneq X$ be a proper closed subset and let $Y_i$ be integral $K$-varieties, endowed with finite surjective separable morphisms $\pi_i \colon Y_i \to X$ of degree at least two, $i= 1,\ldots,n$ (cf. Lemma \ref{LemmaThinSetWeaker}). We have to prove that $\Gamma \not \subseteq C \cup \bigcup_{i=1}^n\pi_i(Y_i(K))$.

	Since the extensions of function fields induced by $\pi_i$ are separable, there exists a dense open subscheme $W\subseteq X$ such that, for every $i=1,\ldots,n$,
	the restriction $\pi_i^{-1}(W) \to W$ is finite \'{e}tale.
	By this and the openness of the normal locus \cite[6.12.6 and 6.13.5]{EGAIV2}, we may choose a normal dense open subscheme $S'\subseteq S$ and a normal dense open subscheme
	\[
		X' \subseteq f^{-1}(S') \setminus C \subseteq X
	\]
	such that $f$ restricts to a finite \'{e}tale morphism of normal $K$-varieties $f' \colon X'\to S'$.
	
	Set $Y_i' = \pi_i^{-1}(X')\subseteq~Y_i$ and denote the restriction of $\pi_i$ by $\pi_i' \colon Y_i' \to X'$. Then $\pi_i'$ is dominant and generically finite of the same degree as $\pi_i$.
	Furthermore, $\Sigma \cap S'$ is not thin in $S'$ and, for $s\in \Sigma\cap S'$, the fibre $X'_s$ is an open subset of $X_s$, so that it is integral (by our assumption that $X_s$ is integral) and $X'_s \cap \Gamma$ is not thin in $X_s$. By Lemma \ref{LemmaNotSubset}, this gives us
	\[
		\Gamma \cap X' \not\subseteq \bigcup_{i=1}^n \pi_i'(Y_i'(K))
	.\]
	Therefore,
	$
		(\Gamma \cap X') \setminus \bigcup_{i=1}^n \pi_i'(Y_i'(K)) \neq \emptyset
	$.	
	Obviously,
	\[
		(\Gamma \cap X') \setminus \bigcup_{i=1}^n \pi_i'(Y_i'(K)) = (\Gamma\cap X') \setminus \bigcup_{i=1}^n \pi_i(Y_i(K))
	.\]
	Finally, by our definition of $X'$,
	\[
		(\Gamma\cap X') \setminus \bigcup_{i=1}^n \pi_i(Y_i(K)) \subseteq \Gamma \setminus \left( C \cup  \bigcup_{i=1}^n \pi_i'(Y_i'(K)) \right)
	.\]
	We conclude that $\Gamma \not \subseteq C \cup \bigcup_{i=1}^n\pi_i(Y_i(k))$, as required.
\end{proof}

From this fibration theorem we can now deduce the first main theorem.

\begin{proof}[Proof of Theorem \ref{TheoremProductHP}]
	Let $K$ denote the fraction field of $R$ and note that $X_K$ and $Y_K$ are geometrically integral over $K$ (as they are integral and their $K$-rational points are dense), so that $X_K \times_K Y_K$ is geometrically integral over $K$. In particular, $X \times_R Y$ is integral.	
	We apply Theorem~\ref{TheoremFibrationIntegral} to $\Gamma :=(X \times_R Y)(R)^{(1)}$, $\Sigma := Y(R)^{(1)}$ and the projection $X\times_R Y\to Y$.
	Since $Y$ has the Hilbert property over $R$, the set $\Sigma$ is not thin in $Y_K$. Let $s\in \Sigma$ and consider the projection morphism $p\colon X_K \times_K Y_K \to Y_K$. Its scheme-theoretic fibre $p^{-1}(\{s\})$ is isomorphic to $X_K$.
	Since $X(R)^{(1)}$ is not thin in $X_K$, it suffices to show that
	$
		X(R)^{(1)} \subseteq \Gamma \cap p^{-1}(\{s\})
	$
	as subsets of $p^{-1}(\{s\}) \cong X_K$, because then $\Gamma \cap p^{-1}(\{s\})$ is not thin in $p^{-1}(\{s\})$.
	
	Let $x\in X(R)^{(1)}$. By the definition of near-$R$-integral points, there exist dense open subschemes $U\subseteq \Spec R$ and $U'\subseteq \Spec R$ with complements of codimension at least two such that $x \in \im(X(U)\to X(K))$ and $s\in \im(Y(U')\to Y(K))$, respectively.
	Note that $U\cap U'$ is also a dense open of $\Spec R$ with complement of codimension at least two. Clearly, this gives us
	\[
		x \in \im(X(U\cap U')\to X(K)) \text{ and } s\in \im(Y(U\cap U')\to Y(K))
	,\]
	so that there exists a unique dotted arrow making the following diagram of $R$-schemes commute.
	\[
		\xymatrix{
			X & X \times_{R} Y \ar[l] \ar[r] & Y\\
			\Spec K \ar[u]^x \ar[r] & U \cap U' \ar[ul] \ar[ur] \ar@{-->}[u] & \Spec K \ar[l] \ar[u]_s
		}
	\]
	This defines a near-integral point of $X \times_R Y$ whose image in $X_K \times_K Y_K$ lies in the fibre $p^{-1}(\{s\})$, and we are done by Theorem \ref{TheoremFibrationIntegral}.
\end{proof}

\section{The proof of Theorem \ref{TheoremEtaleGroupTrivial}}

The following lemma was stated by Corvaja--Zannier \cite{CZHP} for number fields, but their proof works for finitely generated fields of characteristic zero.

\begin{lemma}[{\cite[Proposition 1.5]{CZHP}}]\label{LemmaCZ}
	Let $X$ and $Y$ be quasi-projective integral varieties over $k$ and let $\pi\colon Y \to X$ be a finite surjective morphism. Let $k'/k$
be a finite field extension and let $T \subseteq Y(k')$ be a set such that $\pi(T) \subseteq X(k)$.
	If $\deg \pi > 1$, then there exist
	quasi-projective integral varieties $Y_1,\ldots,Y_n$ over $k$ and
	finite surjective morphisms $(\pi_i \colon Y_i \to X)_{i=1}^n$
	with $\deg\pi_i > 1$ such that $\pi(T ) \subseteq \bigcup_i \pi_i(Y_i(k))$.
\end{lemma}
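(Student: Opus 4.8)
The plan is to give the condition ``$t\in Y(k')$ lies over a $k$-rational point of $X$'' an algebro-geometric meaning via Weil restriction of scalars, and then to extract the required covers from the irreducible components of the resulting finite $X$-scheme. Since $\mathrm{char}\,k=0$, the extension $k'/k$ is finite separable; as $Y$ (hence $Y_{k'}$) is quasi-projective, the Weil restriction $\mathrm{R}_{k'/k}(Y_{k'})\to \mathrm{R}_{k'/k}(X_{k'})$ exists and, being the restriction of the finite morphism $\pi_{k'}$, is finite. Composing with the unit $X\hookrightarrow \mathrm{R}_{k'/k}(X_{k'})$ of the adjunction, I would set
\[
 W:=X\times_{\mathrm{R}_{k'/k}(X_{k'})}\mathrm{R}_{k'/k}(Y_{k'}),\qquad q\colon W\to X,
\]
so that $q$ is finite. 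On $k$-points the adjunction, together with $\mathrm{R}_{k'/k}(X_{k'})(k)=X(k')$, $\mathrm{R}_{k'/k}(Y_{k'})(k)=Y(k')$ and the fact that the unit induces $X(k)\subseteq X(k')$, identifies
\[
 W(k)=\bigl\{(x,t)\ :\ x\in X(k),\ t\in Y(k'),\ \pi(t)=x\bigr\}.
\]
Projecting $(x,t)\mapsto x$ gives $\pi(T)\subseteq q(W(k))$. Writing $W_{\mathrm{red}}=\bigcup_j W_j$ for the decomposition into integral (quasi-projective) components, we have $q(W(k))=\bigcup_j q(W_j(k))$, and the task becomes to turn the $W_j$ into admissible covers.

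The crux, and what I expect to be the main obstacle, is to show that every component $W_j$ \emph{dominating} $X$ does so with degree $\ge 2$; the maps $q|_{W_j}\colon W_j\to X$ are then finite surjective of degree $>1$, as wanted. A dominating component of degree one is the same as a $k(X)$-point of the generic fibre, which by base-change compatibility of Weil restriction is
\[
 W_\eta\ \cong\ \mathrm{R}_{(k(X)\otimes_k k')/k(X)}\bigl(\Spec(k(Y)\otimes_k k')\bigr),
\]
i.e.\ (after decomposing $k(X)\otimes_k k'=\prod_i F_i$ into fields) a $k(X)$-embedding $k(Y)\hookrightarrow F_i$ for every $i$. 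Now each $F_i=k(X)(\bar\theta_i)$ with $\bar\theta_i$ a root of the minimal polynomial of a primitive element of $k'/k$, hence $\bar\theta_i\in\overline{k}$ and $F_i=k(X)\cdot C_i$ is a \emph{constant} extension ($C_i\subseteq\overline{k}$). An embedding $k(Y)\hookrightarrow F_i$ over $k(X)$ would force $k(Y)=k(X)\cdot(k(Y)\cap\overline{k})$ to be a constant extension too; since $Y$ is geometrically irreducible one has $k(Y)\cap\overline{k}=k$, so $k(Y)=k(X)$, contradicting $\deg\pi>1$. This is precisely the point where geometric irreducibility of $Y$ is indispensable — equivalently, that $\pi$ has no nontrivial constant sub-cover — and it cannot be dropped (for the constant cover $\mathbb{A}^1_{k'}\to\mathbb{A}^1_k$ with $T$ the graph of an embedding $k'\hookrightarrow k'$ the statement fails). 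In the intended application this holds because the generic fibre $Y_K$ is geometrically connected.

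Finally I would account for the components $W_j$ that do not dominate $X$: their images cover a proper closed subset $Z=\bigcup_j q(W_j)\subsetneq X$, and any leftover points of $\pi(T)$ lie in $Z(k)$. These I would capture all at once by one auxiliary degree-two cover. Choosing a sufficiently ample $\mathcal L$ on $X$ and a section $s\in\Gamma(X,\mathcal L^{\otimes 2})$ vanishing on $Z$ and not a square, the double cover $V=\Spec_X(\mathcal O_X\oplus\mathcal L^{-1})$ with $w^2=s$ is integral and finite surjective of degree two, branched along $\{s=0\}\supseteq Z$; over any $z\in Z(k)$ its fibre is $\Spec k[w]/(w^2)$, which has a $k$-point, so $Z(k)\subseteq\pi_V(V(k))$. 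Taking $Y_1,\dots,Y_n$ to be the dominating components $W_j$ together with $V$ then gives $\pi(T)\subseteq\bigcup_i\pi_i(Y_i(k))$, completing the argument; note that the branched cover $V$ disposes of all lower-dimensional strata simultaneously, so no Noetherian induction is needed.
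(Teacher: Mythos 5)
The paper itself contains no proof of Lemma~\ref{LemmaCZ}: it quotes \cite[Proposition~1.5]{CZHP} and asserts that Corvaja--Zannier's proof extends to finitely generated fields of characteristic zero. Your restriction-of-scalars argument is in substance that very proof (the fibre product of conjugate covers used there is precisely your $W$ after base change to $\overline{k}$), and your execution is correct: the identification of $W(k)$ via adjunction, the computation of the generic fibre $W_\eta$ by compatibility of Weil restriction with base change and fibre products, the field-theoretic exclusion of degree-one dominating components, and the branched double cover absorbing the non-dominating components (genuinely needed, since the conclusion of the lemma, unlike the definition of thin sets, allows no proper closed subset) are all sound. The steps you leave implicit are routine: that a subfield of $k(X)\cdot C_i$ containing $k(X)$ is again a constant extension is standard Galois theory, using that $k$ is algebraically closed in $k(X)$ (which follows from $k(X)\subseteq k(Y)$ once $Y$ is geometrically irreducible); and a non-square section $s$ vanishing on $Z$ exists for $\mathcal{L}$ ample enough (arrange that $\operatorname{div}(s)$ contains some prime divisor with odd multiplicity).

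The substantive point is the one you flag yourself: your argument needs $Y$ to be geometrically irreducible, and this hypothesis is absent from the statement. Under this paper's conventions (``integral'' does not mean geometrically integral), the lemma as written is in fact false, and your counterexample is valid: for $\pi\colon \mathbb{A}^1_{k'}\to\mathbb{A}^1_k$ with $k'\neq k$, taking $T\subseteq Y(k')=\Hom_{k\text{-alg}}(k'[t],k')$ to consist of the $k'$-algebra homomorphisms with $t\mapsto a$, $a\in k$, one gets $\pi(T)=\mathbb{A}^1(k)$, which is not thin since finitely generated fields of characteristic zero are Hilbertian (\cite[Proposition~13.4.1]{FriedJarden}, as recalled in the introduction); on the other hand, any integral but not geometrically irreducible cover $Y_i\to X$ has $Y_i(k)$ contained in a proper closed subset (its $k$-points lie on every geometric component), so the conclusion of the lemma would exhibit $\mathbb{A}^1(k)$ as thin, a contradiction. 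The discrepancy comes from transcription: Corvaja--Zannier's conventions make their covers geometrically irreducible. The paper's sole application of the lemma, in the proof of Theorem~\ref{TheoremEtaleGroupTrivial}, is unaffected, because there $Y_K$ is finite \'etale over the normal scheme $X_K$, hence normal, and geometrically connected by hypothesis, hence geometrically integral (note that normality is needed for this implication; geometric connectedness alone would not suffice, so your parenthetical justification should be adjusted accordingly). In short: your proof is correct once the necessary hypothesis that $Y$ is geometrically irreducible is added, and your observation shows the statement of the lemma should indeed be so corrected.
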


We now prove Theorem \ref{TheoremEtaleGroupTrivial} using Lemma \ref{LemmaCZ} and a Chevalley--Weil type lifting argument.

\begin{proof}[Proof of Theorem \ref{TheoremEtaleGroupTrivial}]
	Let $K$ denote the fraction field of $R$ and let $\pi \colon Y\to X$ be a finite \'{e}tale (surjective) morphism of schemes over $R$ with $Y$ integral. Then the base change $\pi_K \colon Y_K \to X_K$ of $\pi$ along $\Spec K \to \Spec R$ is also finite \'{e}tale surjective. By assumption, the set $X(R)^{(1)}$ is not thin in $X_K$.
	
	We follow the proof of the Chevalley--Weil theorem given in \cite[Theorem 7.9]{JBook} (see also \cite[Theorem 3.8]{CDJLZ}) to show that there exists a finite field extension $K'/K$ such that $X(R)^{(1)} \subseteq \pi(Y(K'))$.
	
	For each $x\in X(R)^{(1)}$, choose a dense open $U_x\subseteq \Spec R$ with complement of codimension at least two such that $x$ defines a morphism $x \colon U_x \to X$. Set
	$V_x := U_x \times_X Y$, so that $V_x \to U_x$ is finite \'{e}tale. Since $R$ is regular, by Zariski--Nagata purity of the branch locus \cite[Th\'{e}orème X.3.1]{SGA1}, this morphism extends to a finite \'{e}tale morphism
	\[
		\Spec S_x \to \Spec R
	\]
	of degree $\deg \pi$.
	(Here $S_x$ is the normalization of $R$ in $V_x$.)
	By Hermite's finiteness theorem \cite{smallness}, there exist only finitely many finite \'{e}tale coverings of $\Spec R$ of given degree.
	Therefore, there exists a finite field extension $K'$ of $K$ such that $X(R)^{(1)} \subseteq \pi(Y(K'))$.
	
	Let $X' \subseteq X_K$ be a dense open affine. By the finiteness of $\pi_K$, the preimage $Y' = \pi_K^{-1}(X')$ is also affine.
	In particular, the restriction $Y' \to X'$ of $\pi_K$ is a finite surjective morphism of quasi-projective integral varieties over $K$. Set $T := \pi_K^{-1}(X(R)^{(1)}) \cap Y'(K') \subseteq Y'(K')$, and note that
	\[
		\pi_K(T) = X(R)^{(1)} \cap X' \subseteq X'(K)
	.\]
	Assume for a contradiction that $\deg\pi > 1$. Then, by Lemma \ref{LemmaCZ}, we have
	finitely many quasi-projective integral varieties $Y_i$ over $K$
	and dominant rational maps $\pi_i \colon Y_i \to X'$, each of degree $>1$, such that
	$\pi_K(T ) \subseteq \bigcup_i \pi_i(Y_i(K))$.
	From this it follows that
	$X(R)^{(1)} \setminus \bigcup_i \pi_i(Y_i(K)) \subseteq X_K\setminus X'$
	is not dense in $X_K$. This contradicts the fact that $X$ has the Hilbert property over $R$.
	Thus, $\deg \pi =1$, as required.
\end{proof}

As an application of Theorem \ref{TheoremEtaleGroupTrivial}, we show that it implies that
Corvaja--Zannier's result on the geometric algebraic simple connectedness of normal proper varieties with the Hilbert property over number fields holds over finitely generated fields of characteristic zero. The proof of this extension is similar to Corvaja--Zannier's proof, except that we used a stronger version of the Chevalley--Weil theorem \cite[Theorem 3.8]{CDJLZ} to prove Theorem \ref{TheoremEtaleGroupTrivial}. We will include the proof for the sake of completeness, and for illustrating the relevance of near-integral points.

\begin{theorem}[Corvaja--Zannier]
	If $X$ is a normal proper variety over a finitely generated field $k$ of characteristic zero with the Hilbert property over $k$, then each finite \'{e}tale cover of $X_{\overline{k}}$ is trivial.
\end{theorem}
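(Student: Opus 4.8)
The plan is to reduce this geometric statement to the arithmetic Theorem \ref{TheoremEtaleGroupTrivial} by spreading $X$ out to a proper model over a regular base and then exploiting properness to identify rational points with near-integral points. First I would note that $X$ is geometrically integral, since its set of $k$-points is non-thin, hence Zariski dense. Next I would choose a regular $\ZZ$-finitely generated integral domain $R$ of characteristic zero with fraction field $k$ together with a normal, integral, proper model $\mathcal{X}\to\Spec R$ of $X$; such a model exists by spreading out and normalizing, after shrinking $\Spec R$ so that the base is regular and the total space is normal.

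The crucial step is the observation that $X(k)=\mathcal{X}(R)^{(1)}$, which is exactly the phenomenon highlighted before the statement. Since $R$ is normal it equals the intersection of its localizations at height-one primes, each of which is a discrete valuation ring with fraction field $k$. By the valuative criterion of properness, every $x\in X(k)$ extends to a section over each such $R_{\mathfrak p}$, so the rational map $\Spec R\dashrightarrow\mathcal{X}$ determined by $x$ is defined at every codimension-one point; thus it is defined on a dense open $U\subseteq\Spec R$ whose complement has codimension at least two, witnessing that $x$ is near-$R$-integral. The reverse inclusion being trivial, the Hilbert property of $X$ over $k$ coincides with the Hilbert property of $\mathcal{X}$ over $R$, and Theorem \ref{TheoremEtaleGroupTrivial} then shows that $\mathcal{X}$, and hence $X$, admits no nontrivial finite \'etale cover with geometrically connected generic fibre.

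To pass from covers defined over $k$ to genuinely geometric covers, I would argue by contradiction: suppose $W\to X_{\overline{k}}$ is a connected finite \'etale cover of degree $d>1$. Such a cover descends, so there is a finite extension $k'/k$ and a geometrically connected finite \'etale cover $V\to X_{k'}$ of degree $d$ with $V_{\overline{k}}\cong W$. Let $R'$ be the integral closure of $R$ in $k'$; it is finite over $R$ with fraction field $k'$, and after replacing $\Spec R$ by a principal dense open I may assume that $R\subseteq R'$ is finite flat and that $R'$ is regular. Shrinking the base does not destroy the setup: properness and normality of the model persist, so the identification of $k$-points with near-$R$-integral points—and therefore the Hilbert property over the base—is preserved. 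Theorem \ref{TheoremFiniteFlatBaseChange} then gives that $\mathcal{X}_{R'}$ has the Hilbert property over $R'$, i.e. $X_{k'}$ has the Hilbert property over $k'$ in the relevant sense.

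Finally, after a further shrinking of $\Spec R'$ I would spread $V\to X_{k'}$ out to a finite \'etale cover $\mathcal{V}\to\mathcal{X}_{R'}$ of arithmetic schemes over $R'$ with $\mathcal{V}$ integral and $\mathcal{V}_{k'}=V$ geometrically connected; Theorem \ref{TheoremEtaleGroupTrivial} applied over $R'$ then forces this cover to be an isomorphism, contradicting $d>1$. Hence $X_{\overline{k}}$ admits no nontrivial connected finite \'etale cover, which is the claim. I expect the main obstacle to be the bookkeeping of the repeated shrinkings of $\Spec R$ and $\Spec R'$: one must arrange that each shrinking simultaneously preserves regularity, normality and properness of the model (so that $X(k)=\mathcal{X}(R)^{(1)}$ and the Hilbert property survive) while also securing the finite flatness of $R\subseteq R'$ needed for Theorem \ref{TheoremFiniteFlatBaseChange} and the spreading out of the cover. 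The conceptual heart, however, is the valuative-criterion identification of $k$-rational points with near-$R$-integral points.
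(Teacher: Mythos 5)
Your proposal is correct in its conceptual core, and that core coincides with the paper's: spread $X$ out to a normal proper model over a regular $\ZZ$-finitely generated base, use the valuative criterion of properness to identify rational points of the generic fibre with near-integral points (so the Hilbert property over the field becomes the Hilbert property over the ring), and then invoke Theorem~\ref{TheoremEtaleGroupTrivial}. Where you genuinely diverge is in how the finite extension needed to descend the geometric cover is handled. The paper descends $Y'\to X_{\overline{k}}$ to a finite extension $L/k$ \emph{first}, transfers the Hilbert property from $X$ over $k$ to $X_L$ over $L$ purely at the field level via \cite[Proposition 3.2.1]{SerreTopicsGalois}, and only then spreads out over a regular ring $R\subseteq L$ — so the model and the cover are spread out simultaneously, and the model is normal by construction. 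You instead spread out over $R\subseteq k$ first and then climb up to $R'$ (the integral closure of $R$ in $k'$) via Theorem~\ref{TheoremFiniteFlatBaseChange}; since the proof of that theorem is itself Serre's proposition plus near-integral bookkeeping, the two routes use the same underlying ingredient, but yours showcases the paper's base-change theorem while the paper's is leaner.

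Your ordering does create one wrinkle you do not address: to apply Theorem~\ref{TheoremEtaleGroupTrivial} over $R'$ you need $\mathcal{X}_{R'}=\mathcal{X}\times_R R'$ to be a \emph{normal} (and integral) arithmetic scheme, and a finite \emph{flat} base change of a normal scheme need not be normal (nor integral) — ramification of $R\to R'$ over singular fibres of $\mathcal{X}\to\Spec R$ can destroy normality. This is easily repaired: in characteristic zero $k'/k$ is separable, so after one further shrinking of $\Spec R$ the map $\Spec R'\to\Spec R$ is finite \emph{\'etale}, whence $\mathcal{X}_{R'}\to\mathcal{X}$ is finite \'etale and $\mathcal{X}_{R'}$ is normal, and integral because $X$ is geometrically integral (so $k'\otimes_k k(X)$ is a field); alternatively, replace $\mathcal{X}_{R'}$ by its normalization, which is again proper over $R'$ with the same generic fibre, and re-run your valuative-criterion identification to recover the Hilbert property. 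With that one-line fix (and noting, as you do, that shrinking the base preserves near-integrality and hence the Hilbert property), your argument is complete.
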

\begin{proof}
	Let $Y'\to X_{\overline{k}}$ be a finite \'etale morphism with $Y'$ a (normal) integral scheme over $\overline{k}$. Let $L/k$ be a finite field extension such that $Y'\to X_{\overline{k}}$ is defined over $L$, i.e., there is a finite \'etale morphism $Y\to X_L$ which is isomorphic to $Y'\to X_{\overline{k}}$ over $\overline{k}$. Now, by standard spreading out arguments
	\cite[17.7.8(ii)]{EGAIV4},
	there is a $\mathbb{Z}$-finitely generated regular subring $R\subseteq L$, a normal proper scheme $\mathcal{X}$ over $R$ with $\mathcal{X}_L\cong X$, a normal proper scheme $\mathcal{Y}$ over $R$ with $\mathcal{Y}_L\cong Y$ and a finite \'etale morphism $\mathcal{Y}\to \mathcal{X}$ extending $Y\to X_L$.

Since $X$ has the Hilbert property over $k$, we have that $X_L$ has the Hilbert property over $L$; see \cite[Proposition 3.2.1]{SerreTopicsGalois}. By the valuative criterion of properness, we have that $\mathcal{X}(R)^{(1)} = X_L(L)$. It follows that $\mathcal{X}$ has the Hilbert property over $R$ (by Definition \ref{DefinitionHPoverR}). Thus, by Theorem \ref{TheoremEtaleGroupTrivial}, the morphism $\mathcal{Y}\to \mathcal{X}$ is an isomorphism. This implies that $Y'\to X_{\overline{k}}$ is an isomorphism, as required.
\end{proof}

\section{The proof of Theorem \ref{TheoremFiniteFlatBaseChange}}

The persistence of the Hilbert property under finite flat extensions of finitely generated domains is a consequence of \cite[Proposition~3.2.1]{SerreTopicsGalois}.

\begin{proof}[Proof of Theorem \ref{TheoremFiniteFlatBaseChange}]
	Let $K$ and $L$ denote the fraction fields of $R$ and $S$, respectively.
	Define $A := X_S(S)^{(1)} \subseteq X_S(L)$, and assume that $A$ is thin in the geometrically irreducible variety $X_L$.
	By \cite[Proposition 3.2.1]{SerreTopicsGalois}, this implies that $A \cap X_K(K)$ is thin in $X_K$.
	Thus, to conclude the proof, it suffices to show that $A \cap X_K(K) \supseteq X(R)^{(1)}$. Given a near-$R$-integral point $x\in X(R)^{(1)}$,
	let $U\subseteq \Spec R$ be a dense open whose complement is of codimension at least two such that $x$ extends to a morphism $U\to X$.
	Let $V := \Spec S \times_{\Spec R} U$, and note that its complement is of codimension at least two since
	$\Spec S \to \Spec R$ is finite flat surjective. Since $V \to X$ is a near-$S$-integral point, this shows that $x\in X(S)^{(1)}$,
	and thus $x\in A\cap X_K(K)$,	as required.
\end{proof}

\bibliography{references}{}
\bibliographystyle{alpha}

\end{document}